\documentclass[12pt,reqno]{amsart}
\usepackage{epsfig}
\usepackage{graphicx}
\usepackage{mathrsfs}
\usepackage{array}
\usepackage{verbatim}
\usepackage{amssymb}
\usepackage{latexsym}
\usepackage{amsmath}
\usepackage{amsfonts}
\usepackage{amsbsy}
\usepackage{amscd}
\usepackage[usenames]{color} %\color{Red}
\newcommand{\R}{{\mathbb R}}
\newcommand{\Z}{{\mathbb Z}}

\newtheorem{prop}{Proposition}[section]
\newtheorem{lem}[prop]{Lemma}

\newtheorem{cor}[prop]{Corollary}
\newtheorem{thm}[prop]{Theorem}

\newtheorem{rem}[prop]{Remark}

\begin{document}
\baselineskip=17pt

\title[]
{Spectrality of convolution and product measures: a block decomposition approach}

\author{ting zhou}

\address{School of Mathematicsand Statistics, Central South University, Changsha 410085, China}
\email{tingzhouhn@163.com}

\date{\today}
\keywords{Spectral measure, convolution measure, product measure, Fourier transform, spectral decomposition}
\subjclass[2010]{Secondary 42C05; Primary 28A80.}

\begin{abstract}
We study the spectrality of convolution and product measures.
Let~$\mu$~be a compactly supported Borel probability measure on~$\Bbb{R}^d$~such that
~$\mathcal{Z}(\widehat\mu)\cup\{0\}=T$~is an additive subgroup, and let~$\nu$~be a Borel probability measure on~$\Bbb{R}^d$.
We prove that~$\mu*\nu$~is spectral if and only if~$T$~is a spectrum of~$\mu$, ~$\nu$~is spectral, and~$supp\nu$~is contained in a translate of the annihilator of~$T$.
Moreover, every spectrum of~$\mu*\nu$~decomposes into full cosets of~$T$, and every choice of one point from each coset is a spectrum of~$\nu$.
We also obtain a convolution-factor criterion which strengthens a result of Wu.
As applications, we characterize the spectrality of~$L_{[0,1]}*\nu$~and prove that the spectrality of~$L_{[0,1]^d}*\nu$~implies that of~$\nu$~without a compact support assumption.
For product measures, we give necessary and sufficient
conditions for an orthogonal set to be a spectrum
by relating its block structure to spectra of the factors.
No compact support assumptions are required.
\end{abstract}
\maketitle
\section{\bf Introduction\label{sect.1}}
\indent
Let~$\mu$~be a Borel probability measure on~$\Bbb{R}^d$. The Fourier transform of~$\mu$~is defined by
$$\widehat\mu(\xi)=\int_{\Bbb{R}^d}e^{-2\pi i\langle x,\xi\rangle}\,d\mu(x),\qquad \xi\in\Bbb{R}^d.$$
Denote by~$\mathcal{Z}(\widehat\mu)=\{\xi\in\Bbb{R}^d:\widehat\mu(\xi)=0\}$~the zero set of~$\widehat\mu$.
For~$\lambda\in\Bbb{R}^d$, write~$e_\lambda(x)=e^{2\pi i\langle\lambda,x\rangle}$.
A countable set~$\Lambda\subset\Bbb{R}^d$~is called an \emph{orthogonal set} for~$\mu$~if
~$E(\Lambda)=\{e_\lambda:\lambda\in\Lambda\}$~is an orthonormal family in~$L^2(\mu)$.
Equivalently,
\begin{equation}\label{(1.1)}
(\Lambda-\Lambda)\setminus\{0\}\subset\mathcal{Z}(\widehat\mu).
\end{equation}
If~$E(\Lambda)$~is an orthonormal basis for~$L^2(\mu)$, then~$\mu$~is called a \emph{spectral measure}, and~$\Lambda$~is called a \emph{spectrum} of~$\mu$.
For a measurable set~$\Omega\subset\Bbb{R}^d$~with positive finite Lebesgue measure, we write
~$L_\Omega=|\Omega|^{-1}\mathbf1_\Omega\,dx$~for its normalized Lebesgue measure.
The set~$\Omega$~is called spectral if~$L_\Omega$~is a spectral measure.\\
\indent
The study of spectral sets began with Fuglede~\cite{F74} in 1974 in connection with commuting self-adjoint partial differential operators.
He proposed the celebrated Fuglede conjecture: a measurable set of positive finite Lebesgue measure is spectral if and only if it tiles the Euclidean space by translations.
Tao~\cite{T04} disproved the conjecture in dimensions at least five, and subsequent work of Kolountzakis and Matolcsi~\cite{KM06a,KM06b} gave counterexamples in dimensions at least three.
For convex domains, Lev and Matolcsi~\cite{LM22} proved the conjecture in every dimension.
A recent preprint of Zhang~\cite{Z26} reports counterexamples to both implications in dimension two.
Although the conjecture fails in general, the relation between spectrality and translational tilings continues to motivate research in harmonic analysis, geometry and number theory.\\
\indent
After the Fuglede conjecture was disproved, research focus gradually shifted from
spectrality of sets to spectrality of more general measures. Fractal measures
(such as self-similar measures, self-affine measures, and Moran measures), due
to their singularity, non-integer dimensions, and complex zero set structures,
have become important objects in spectral measure theory. The study of spectrality
of fractal measures began with the pioneering work of Jorgensen and Pedersen
\cite{JP98} in 1998. They considered the self-similar measures
$\mu_{b,a}$ generated by the iterated function system
$\tau_0(x) = x/b$, $\tau_1(x) = (x + a)/b$ ($b > 1$ an integer, $a \in \mathbb{Z}$).
Surprisingly, they discovered that for $b = 3$, $a = 2$ (the classical middle-third
Cantor measure), $\mu_{3,2}$ is \textbf{not} a spectral measure, while for
$b = 4$, $a = 2$ (the middle-fourth Cantor measure), $\mu_{4,2}$ \textbf{is} a
spectral measure. This finding revealed the high sensitivity of spectrality to
the arithmetic structure of digit sets and expansion factors, this provided the first examples of singular continuous spectral measures. This result initiated the systematic study of fractal
spectral measure theory.
\\
\indent
Following the work of Jorgensen and Pedersen, considerable
progress has been made in the study of spectral self-similar
measures. \L{}aba and Wang~\cite{LW02} established spectrality
for a class of Cantor measures. Hu and Lau~\cite{HuLau2008}
studied the existence of infinite orthogonal families of
exponential functions for Bernoulli convolutions, while
Dai~\cite{Dai12} characterized their spectrality.
Dutkay, Han and Sun~\cite{Dutkay2009} investigated the spectra
of the one-fourth Cantor measure using a tree structure.
More generally, Dutkay, Haussermann and Lai~\cite{DHL19}
proved that Hadamard triples generate self-affine spectral
measures.
\\
\indent
In spectral measure theory, an important issue is to investigate how spectrality is preserved or lost under algebraic operations on measures. We first consider convolution. For Borel probability measures~$\mu$~and~$\nu$~on~$\Bbb{R}^d$, one has
\begin{equation}\label{(1.2)}
\widehat{\mu*\nu}(\xi)=\widehat\mu(\xi)\widehat\nu(\xi),\qquad
\mathcal{Z}(\widehat{\mu*\nu})=\mathcal{Z}(\widehat\mu)\cup\mathcal{Z}(\widehat\nu).
\end{equation}
Thus the Fourier zero set of a convolution is determined by the zero sets of its factors.
This raises the following question.

\noindent\textbf{(P1)}~Under what conditions does the spectrality of~$\mu*\nu$~imply that both~$\mu$~and~$\nu$~are spectral?

\indent
Gabardo and Lai~\cite{GL14} proved that both factors are spectral when their convolution is Lebesgue measure on a cube, up to normalization.
Wu~\cite[Theorem~1.2]{W24} considered a condition on the differences of Fourier zeros.
In the notation~$\eta=\mu*\nu$~and~$D=\mathcal{Z}(\widehat\mu)\setminus\mathcal{Z}(\widehat\nu)$, this condition is
\begin{equation}\label{(1.3)}
(D-D)\cap\mathcal{Z}(\widehat\eta)\subset D.
\end{equation}
For compactly supported probability measures on~$\R$, Wu~\cite{W24} proved that~$\eta$~being spectral implies that~$\mu$~is spectral.
If, in addition,~$\nu$~has finite support and~$\widehat\nu$~is periodic, then~$\nu$~is also spectral.
In this paper, we obtain spectrality of both factors under~\eqref{(1.3)}~without these additional assumptions on~$\nu$.
Our result holds on~$\Bbb{R}^d$, and only~$\mu$~is required to have compact support.
\\
\indent
On the other hand, even if both~$\mu$~and~$\nu$~are spectral,
their convolution~$\mu*\nu$~need not be spectral.
For example,~$L_{[0,1]}$~is a spectral measure with spectrum~$\Z$,
whereas~$L_{[0,1]}*L_{[0,1]}$~is not spectral by
Corollary~\ref{cor2.7} below.
Thus spectrality of the two factors alone is not sufficient.
This leads us to ask what additional conditions on their supports,
together with spectrality of the factors, ensure that the convolution
is spectral.
\\
\indent
We next consider product measures. If~$\mu$~and~$\nu$~are spectral with spectra~$\Lambda_\mu$~and~$\Lambda_\nu$, respectively, then~$\Lambda_\mu\times\Lambda_\nu$~is a spectrum of~$\mu\times\nu$.
The converse leads to the product spectrality problem.

\noindent\textbf{(P2)}~If~$\mu\times\nu$~is spectral, must both~$\mu$~and~$\nu$~be spectral?

\indent
For domains, Greenfeld and Lev~\cite{GL16} proved a positive result when one factor is an interval.
Kolountzakis~\cite{K16} gave another proof and treated unions of two intervals, while Greenfeld and Lev~\cite{GL20} considered convex polygonal factors.
Somlai~\cite{S26} constructed a counterexample to the general product assertion.
Taking normalized Lebesgue measures on those sets also gives a counterexample to~\textbf{(P2)}.
Positive results remain available under additional geometric assumptions; for example, Ramabadran and van Vliet~\cite{RV25} treated a perturbed interval factor.
\\
\indent
For general measures, Kolountzakis, Lai, Lai and Li~\cite{KLLL26}
studied the case in which the Fourier zero set of one factor
is contained in an additive subgroup.
In particular, their Theorem~1.2(2) gives a positive answer
to~\textbf{(P2)}~when~$\mathcal{Z}(\widehat\mu)\cup\{0\}=T$
is an additive subgroup.
We give a different proof by decomposing an arbitrary spectrum
of~$\mu\times\nu$~according to the cosets of~$T$~in its
first-coordinate projection.
We show that every nonempty projected block is a full coset of~$T$
and hence a spectrum of~$\mu$.
Moreover, choosing one first-coordinate frequency from each block
and taking the union of the corresponding second-coordinate
sections always produces a spectrum of~$\nu$.
These conclusions hold without compact support assumptions
on either factor.
\\
\indent
We now describe our main results.
We first study the spectrality of convolution measures.
For the convolution theorem, assume that
\begin{equation}\label{(1.4)}
\mathcal{Z}(\widehat\mu)\cup\{0\}=T,~~~~T\subset\Bbb{R}^d\text{ an additive subgroup}.
\end{equation}
Since~$\widehat\mu(0)=1$~and~$\widehat\mu$~is continuous,~$T$~has no nonzero elements sufficiently close to the origin. Hence~$T$~is discrete and countable.
Under this subgroup assumption, we obtain the following
characterization of spectral convolution measures.
\begin{thm}\label{thm1.1}
Let~$\mu$~be a compactly supported Borel probability measure
on~$\Bbb{R}^d$, and let~$\nu$~be a Borel probability measure
on~$\Bbb{R}^d$.
Suppose that
$$
\mathcal{Z}(\widehat{\mu})\cup\{0\}=T
$$
for an additive subgroup~$T\subset\Bbb{R}^d$, and define
$$
T^\perp
=\{x\in\Bbb{R}^d:\langle t,x\rangle\in\Bbb{Z}
\text{ for all }t\in T\}.
$$
Then~$\mu*\nu$~is spectral if and only if~$T$~is a spectrum
of~$\mu$,~$\nu$~is spectral, and
$$
supp\nu\subset a+T^\perp
$$
for some~$a\in\Bbb{R}^d$.\\
\indent
More precisely, for any spectrum~$\Lambda$~of~$\mu*\nu$,
each nonempty block~$\Lambda\cap(c+T)$~equals~$c+T$,
and every choice of one point from each block is a spectrum
of~$\nu$.
Conversely, if these equivalent conditions hold, then
~$\Gamma+T$~is a spectrum of~$\mu*\nu$~for every spectrum
~$\Gamma$~of~$\nu$.
\end{thm}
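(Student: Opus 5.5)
The plan is to use the Jorgensen--Pedersen criterion: $\Lambda$ is a spectrum of a probability measure $\rho$ if and only if $Q_{\rho,\Lambda}(\xi)=\sum_{\lambda\in\Lambda}|\widehat\rho(\xi+\lambda)|^2\equiv1$. We apply it to $\eta=\mu*\nu$ with $\widehat\eta=\widehat\mu\,\widehat\nu$ by \eqref{(1.2)}, and organize $\Lambda$ by blocks $B_c=\Lambda\cap(c+T)$.

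\textbf{Step 1: basic structure of $\Lambda$.} Let $\Lambda$ be a spectrum of $\eta$. If $\lambda,\lambda'$ lie in different blocks, then $\lambda-\lambda'\notin T$. Hence $\lambda-\lambda'\in\mathcal Z(\widehat\nu)$, because $\mathcal Z(\widehat\eta)=\mathcal Z(\widehat\mu)\cup\mathcal Z(\widehat\nu)$ and $\mathcal Z(\widehat\mu)\subset T$. So every choice $\Gamma$ of one point per block is an orthogonal set for $\nu$, and $\sum_{\gamma\in\Gamma}|\widehat\nu(\xi+\gamma)|^2\le1$ by Bessel. Taking near-maximizers in each block gives
$$
\sum_c s_c(\xi)^2\le 1,\qquad s_c(\xi)=\sup_{\lambda\in B_c}|\widehat\nu(\xi+\lambda)|.
$$
Also $T$ is an orthogonal set for $\mu$, so $\sum_{t\in T}|\widehat\mu(\xi+t)|^2\le1$. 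Consequently
$$
1=Q_{\eta,\Lambda}(\xi)\le\sum_c s_c(\xi)^2\sum_{\lambda\in B_c}|\widehat\mu(\xi+\lambda)|^2\le\sum_c s_c(\xi)^2\le1 .
$$
Every inequality is therefore an equality, for all $\xi$.

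\textbf{Step 2: $T$ is a spectrum of $\mu$ and each block is full.} Since $\mu$ has compact support, $\widehat\mu$ extends to an entire function. The standard argument (as for Jorgensen--Pedersen functions of compactly supported measures) shows that $\xi\mapsto\sum_{t\in T}|\widehat\mu(\xi+c+t)|^2$ is real-analytic, and so is $\sum_{\lambda\in B_c}|\widehat\mu(\xi+\lambda)|^2$. Fix a block and some $\lambda_0\in B_c$. Since $\widehat\nu(0)=1$, we have $s_c(\xi)>0$ on a neighbourhood of $-\lambda_0$. There the equality in Step 1 forces $\sum_{\lambda\in B_c}|\widehat\mu(\xi+\lambda)|^2=1$. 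By Bessel, $\sum_{t\in T}|\widehat\mu(\xi+c+t)|^2\le1$, and it dominates the $B_c$ sum, so it also equals $1$ on that open set. By analyticity it equals $1$ everywhere, so $T$ is a spectrum of $\mu$.

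Now suppose $c+t_0\notin B_c$. Then $|\widehat\mu(\xi+c+t_0)|^2\le 1-\sum_{B_c}|\widehat\mu(\xi+\lambda)|^2$, which vanishes on an open set. Analyticity then gives $\widehat\mu\equiv0$, contradicting $\widehat\mu(0)=1$. Hence $B_c=c+T$.

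\textbf{Step 3: $|\widehat\nu|$ is $T$-periodic, $\nu$ is spectral, and the support condition holds.} From the equality $\sum_{t}|\widehat\mu(\xi+c+t)|^2|\widehat\nu(\xi+c+t)|^2=s_c(\xi)^2$ with weights summing to $1$, we get $|\widehat\nu(\xi+c+t)|=s_c(\xi)$ whenever $\widehat\mu(\xi+c+t)\ne0$. The zero set of the analytic $\widehat\mu$ is nowhere dense and $\widehat\nu$ is continuous, so $|\widehat\nu|$ is $T$-periodic. Hence every choice $\Gamma$ of representatives satisfies $Q_{\nu,\Gamma}=\sum_c s_c^2\equiv1$, so $\Gamma$ is a spectrum of $\nu$.

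Periodicity gives $|\widehat\nu(t)|=|\widehat\nu(0)|=1$ for $t\in T$. Thus $\nu$ is concentrated on $\{x:\langle t,x\rangle\in\theta_t+\Z\}$, and $t\mapsto\theta_t \bmod 1$ is a homomorphism $T\to\R/\Z$. Since $T$ is discrete, it is a free group generated by linearly independent vectors $t_1,\dots,t_k$. We can therefore choose $a$ with $\langle t_j,a\rangle=\theta_{t_j}$, which gives $\operatorname{supp}\nu\subset a+T^\perp$.

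\textbf{Step 4: the converse.} Assume the three conditions. The support condition gives $\widehat\nu(\xi+t)=e^{-2\pi i\langle a,t\rangle}\widehat\nu(\xi)$ for $t\in T$. Let $\Gamma$ be a spectrum of $\nu$. Distinct points of $\Gamma$ lie in distinct cosets of $T$, since $|\widehat\nu|=1$ on $T$; hence $\Gamma+T$ is a disjoint union of these cosets. Then
$$
Q_{\eta,\Gamma+T}(\xi)=\sum_{\gamma}|\widehat\nu(\xi+\gamma)|^2\sum_{t}|\widehat\mu(\xi+\gamma+t)|^2=1 .
$$
This establishes the ``conversely'' clause and the equivalence.

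The main obstacle is Step 2. It needs the analyticity of the partial Jorgensen--Pedersen sums, including the sum over the possibly infinite block $B_c$, which must converge locally uniformly as an analytic function; this is exactly where compact support of $\mu$ enters. I would first establish that sums $\sum_{t\in S}|\widehat\mu(\xi+t)|^2$ over any orthogonal set $S$ extend holomorphically.
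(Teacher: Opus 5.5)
Your proposal is correct and follows essentially the paper's route: the coset decomposition of $\Lambda$ modulo $T$, the chain $1\le\sum_c s_c^2\sum_{\lambda\in B_c}|\widehat\mu(\xi+\lambda)|^2\le\sum_c s_c^2\le1$ with termwise equality (the content of Lemma~\ref{lem2.3}), analyticity of the Jorgensen--Pedersen sum coming from compact support of $\mu$, and the same computation showing $\Gamma+T$ is a spectrum. The deviations are only cosmetic. You prove fullness of each block from $\widehat\mu$ vanishing on an open set, where the paper observes that $e_u$ would be orthogonal to the basis $E(\Lambda_i)$. You also construct $a$ by solving $\langle t_j,a\rangle=\theta_{t_j}$ on a basis of the discrete group $T$, whereas the paper extends $e^{-2\pi i\langle t,x\rangle}=\widehat\nu(t)$ to all of $\mathrm{supp}\,\nu$ by continuity and simply takes any $a\in\mathrm{supp}\,\nu$, which avoids the structure theorem for discrete subgroups.
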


The proof also yields a criterion for decomposing a convolution
spectrum into spectra of its factors.
As an application, we strengthen the convolution-factor theorem
of Wu~\cite{W24}.
Let~$\mu$~be compactly supported and let~$\nu$~be an arbitrary
Borel probability measure on~$\Bbb{R}^d$.
Writing
$$
D=\mathcal{Z}(\widehat{\mu})
  \setminus\mathcal{Z}(\widehat{\nu}),
$$
we show that, under the condition
$$
(D-D)\cap\mathcal{Z}(\widehat{\mu*\nu})\subset D,
$$
the spectrality of~$\mu*\nu$~implies the spectrality of both
~$\mu$~and~$\nu$.
Thus the finite-support and Fourier-periodicity assumptions
on the second factor in Wu's result are not required.
\\
\indent
In particular, taking~$\mu=L_{[0,1]}$~and~$T=\Bbb{Z}$
gives a complete characterization in one dimension:
$$L_{[0,1]}*\nu\text{ is spectral}\quad\Longleftrightarrow\quad
\nu\text{ is spectral and }supp\nu\subset a+\mathcal{Z}\text{ for some }a\in\R.$$
In this case, the result of \L{}aba and Wang~\cite[Theorem~1.2(2)]{LW06} further implies that~$\nu$~is a uniform probability measure on a finite subset of~$a+\mathcal{Z}$.
\\
\indent
For higher-dimensional cubes, the lattice-support
characterization above does not extend directly.
Nevertheless, the implication from the spectrality of the
convolution to that of the second factor remains valid.

\begin{thm}\label{thm1.2}
Let~$\nu$~be a Borel probability measure on~$\Bbb{R}^d$.
If~$L_{[0,1]^d}*\nu$~is spectral, then~$\nu$~is spectral.
\end{thm}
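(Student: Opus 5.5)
The plan is to proceed by induction on the dimension, peeling off one coordinate at a time. At each step the factor removed is a one-dimensional interval measure, and we argue along the lines of the block decomposition used for Theorem~\ref{thm1.1}. Write $\eta=L_{[0,1]^d}*\nu$. Then $\widehat\eta(\xi)=\prod_{j=1}^d \widehat{L_{[0,1]}}(\xi_j)\,\widehat\nu(\xi)$, and
$$\mathcal{Z}(\widehat{L_{[0,1]^d}})=\{\xi:\ \xi_j\in\Z\setminus\{0\}\text{ for some }j\}.$$
Let $\Lambda$ be a spectrum of $\eta$. We will use the standard completeness criterion $Q_\Lambda(\xi):=\sum_{\lambda\in\Lambda}|\widehat\eta(\xi+\lambda)|^2\equiv1$, together with $\sum_{k\in\Z^d}|\widehat{L_{[0,1]^d}}(\xi+k)|^2\equiv1$. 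These hold without any compact support assumption on $\nu$, which is why none is needed.

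\textbf{Step 1 (blocks).} Split $\Lambda$ into blocks $\Lambda\cap(c+\Z^d)$. The aim is to show, exactly as in Theorem~\ref{thm1.1}, that every nonempty block is a full coset $c+\Z^d$. Fix $c$ with $\Lambda\cap(c+\Z^d)\neq\emptyset$. Evaluate $Q_\Lambda$ at points $\xi=-c-k+s$ and split the sum into the contribution of the block and the contribution of $\Lambda$ outside the block. Then let $s$ vary in directions where the block contribution is controlled by $\sum_k|\widehat{L_{[0,1]^d}}|^2\le 1$, using the product form of the sinc factors coordinate by coordinate. If some $c+k$ were missing from $\Lambda$, the maximality of the orthonormal system should fail: $e_{c+k}$ would be orthogonal to all of $E(\Lambda)$. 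I would verify this via the identity $\langle e_{c+k},e_\lambda\rangle_{L^2(\eta)}=\widehat\eta(\lambda-c-k)$. For $\lambda$ in the same block this vanishes automatically, since $\lambda-c-k\in\Z^d\setminus\{0\}$. For the other $\lambda$, it should vanish by comparing with some $\lambda_0\in\Lambda\cap(c+\Z^d)$, using $\widehat\eta(\lambda-\lambda_0)=0$ and the fact that $\lambda-c-k$ and $\lambda-\lambda_0$ differ by an integer vector.

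\textbf{Step 2 (representatives).} Granting Step~1, write $\Lambda=\Gamma+\Z^d$, where $\Gamma$ contains one point from each block. I will show that $\Gamma$ is a spectrum of $\nu$ via Parseval. For fixed $\xi$, group the sum defining $Q_\Lambda(\xi)$ by cosets:
$$1=\sum_{\gamma\in\Gamma}\sum_{k\in\Z^d}|\widehat{L_{[0,1]^d}}(\xi+\gamma+k)|^2|\widehat\nu(\xi+\gamma+k)|^2 .$$
One then wants to replace $|\widehat\nu(\xi+\gamma+k)|^2$ by $|\widehat\nu(\xi+\gamma)|^2$ and sum out $k$ to get $\sum_\gamma|\widehat\nu(\xi+\gamma)|^2=1$. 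Rather than claiming periodicity of $\widehat\nu$ (false in general, which is why the lattice-support statement fails for $d\ge2$), I would integrate in $\xi$ over a fundamental domain. Integrating over $[0,1)^d$ and reorganizing by $\Z^d$ turns the identity into $\int_{\R^d}|\widehat{L_{[0,1]^d}}(\xi)|^2\,F(\xi)\,d\xi=1$, where $F(\xi)=\sum_\gamma|\widehat\nu(\xi+\gamma)|^2$. Since $\int|\widehat{L_{[0,1]^d}}|^2=1$, this gives a weighted average of $F$ equal to $1$. Applied at all shifts $\xi+\zeta$, it gives $F*|\widehat{L_{[0,1]^d}}|^2\equiv1$.

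Independently, $\Gamma$ should be orthogonal for $\nu$, so $F\le1$. Indeed, for $\gamma\ne\gamma'$ we have $\widehat\eta(\gamma-\gamma'+k)=0$ for all $k$. Since the sinc factor vanishes on only a null set of $k$-shifts, this forces $\widehat\nu(\gamma-\gamma'+k)=0$ for many $k$, and continuity of $\widehat\nu$ should yield $\widehat\nu(\gamma-\gamma')=0$ after possibly re-choosing representatives. Combining $F\le 1$ with $F*|\widehat{L_{[0,1]^d}}|^2\equiv1$, where the kernel is positive almost everywhere, forces $F\equiv1$ almost everywhere, hence everywhere by continuity. Thus $\Gamma$ is a spectrum of $\nu$.

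\textbf{Main obstacle.} The hard part is the orthogonality of $\Gamma$ for $\nu$ and the fullness of blocks in Step~1. For $d\ge2$ a difference $\gamma-\gamma'$ may have a single integer coordinate, in which case $\widehat\eta$ vanishes through the cube factor rather than through $\widehat\nu$. My first attempt would be to choose representatives cleverly: a shift by $k\in\Z^d$ can move a difference off the hyperplanes $\{\xi_j\in\Z\setminus\{0\}\}$ without leaving the coset. If that fails, I would fall back on induction on $d$. This means treating $\eta$ as $L_{[0,1]}^{(1)}*\bigl(L_{[0,1]^{d-1}}^{(2,\dots,d)}*\nu\bigr)$ and running the above argument with the subgroup $\Z e_1$ along the first coordinate, fibered over the remaining coordinates.
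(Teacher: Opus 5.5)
There is a genuine gap. The claim in Step~1, that every nonempty block $\Lambda\cap(c+\Z^d)$ is a full coset, is false for $d\ge2$, and Step~2 rests on it.

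Take $d=2$, $\nu=\delta_0$, and $\Lambda=\{(n,m+n\alpha):n,m\in\Z\}$ with $\alpha$ irrational. This is a spectrum of $L_{[0,1]^2}$: distinct points differ either by a nonzero integer in the first coordinate or by $(0,m-m')$ with $m\neq m'$, and $\sum_{n,m}|h(\xi_1+n)|^2|h(\xi_2+m+n\alpha)|^2=1$. Yet its blocks modulo $\Z^2$ are the single columns $\{n\}\times(n\alpha+\Z)$, so $\Lambda$ is not of the form $\Gamma+\Z^d$.

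Your verification breaks exactly where you hedge. From $\widehat\eta(\lambda-\lambda_0)=0$ you cannot get $\widehat\eta(\lambda-c-k)=0$, because $\mathcal{Z}(\widehat\eta)$ is not $\Z^d$-invariant: a nonzero integer coordinate can be shifted to $0$, and $\widehat\nu$ has no periodicity. In the example, $e_{(j,n\alpha)}$ with $j\neq n$ is not orthogonal to $e_{(j,m'+j\alpha)}$ in $L^2(\eta)$. In Theorem~\ref{thm1.1}, fullness is proved in $L^2(\mu)$, after the block is known to be a spectrum of $\mu$, not by orthogonality in $L^2(\mu*\nu)$.

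Step~2 also fails on its own terms. Integrating $\sum_\gamma\sum_k|\widehat{L_{[0,1]^d}}(\xi+\gamma+k)|^2|\widehat\nu(\xi+\gamma+k)|^2$ over $[0,1)^d$ gives $\sum_\gamma\int_{\R^d}|\widehat{L_{[0,1]^d}}(\zeta)|^2|\widehat\nu(\zeta)|^2\,d\zeta$. The shift by $\gamma$ enters both factors at the same point, so you never reach $\int|\widehat{L_{[0,1]^d}}|^2F$. Likewise, continuity of $\widehat\nu$ cannot transport zeros across integer translates, so orthogonality of $\Gamma$ for $\nu$ is not established.

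Your fallback of removing one interval factor at a time is the right reduction, which the paper uses. However, ``running the above argument with the subgroup $\Z e_1$'' fails for the same reason: in the example above, the blocks modulo $\Z e_1$ are singletons. The missing idea is to stop asking for full cosets.

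For $\sigma=L_{[0,1]}\times\delta_{0^{d-1}}$ and a spectrum $\Lambda$ of $\sigma*\eta$, the paper proceeds as follows:
\begin{itemize}
\item Partition the projection $\pi_1(\Lambda)$ modulo $\Z$ into blocks $A_i$, and keep the entire sections $S_{i,a}=\{\lambda\in\Lambda:\lambda_1=a\}$.
\item Each section is orthogonal for $\eta$, since $h(0)=1$. Sections from different blocks are mutually orthogonal for $\eta$, since $h(a-b)\neq0$ there. Hence every union $\Gamma=\bigcup_iS_{i,a_i}$ is orthogonal for $\eta$, and this gives $\sum_iM_i(\xi)\le1$, where $M_i=\sup_{a\in A_i}q_{i,a}$ and $q_{i,a}(\xi)=\sum_{\lambda\in S_{i,a}}|\widehat\eta(\xi+\lambda)|^2$.
\item The chain $1=\sum_i\sum_{a\in A_i}|h(\xi_1+a)|^2q_{i,a}(\xi)\le\sum_iM_i(\xi)\le1$ and Lemma~\ref{lem2.3} force $q_{i,a}(\xi)=M_i(\xi)$ whenever $h(\xi_1+a)\neq0$.
\item This holds for all $a$ simultaneously once $\xi_1$ avoids the countable set $\bigcup_{a\in\pi_1(\Lambda)}(\Z-a)$. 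So $Q_{\eta,\Gamma}=1$ on a dense set, hence everywhere by continuity.
\end{itemize}
Iterating over the $d$ coordinate factors gives the theorem.
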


For product measures, our general block characterization
yields the following result. It recovers the factor
spectrality conclusion of~\cite[Theorem~1.2(2)]{KLLL26}
and describes the structure of every spectrum of the
product measure.

\begin{thm}\label{thm1.3}
Let~$\mu$~and~$\nu$~be Borel probability measures
on~$\Bbb{R}^{d_1}$~and~$\Bbb{R}^{d_2}$, respectively.
Suppose that
$$
\mathcal{Z}(\widehat{\mu})\cup\{0\}=T
$$
for an additive subgroup~$T\subset\Bbb{R}^{d_1}$.
Then~$\mu\times\nu$~is spectral if and only if~$T$~is
a spectrum of~$\mu$~and~$\nu$~is spectral.
\\
\indent
More precisely, let~$\Lambda$~be any spectrum of~$\mu\times\nu$,
and let~$\pi_1(x,y)=x$~be the projection onto~$\Bbb{R}^{d_1}$.
Let~$\{c_i+T:i\in I\}$~be the distinct cosets of~$T$
meeting~$\pi_1(\Lambda)$, and define
$$
A_i=\pi_1(\Lambda)\cap(c_i+T),\qquad
B_{i,\lambda}
=\{\eta\in\Bbb{R}^{d_2}:(\lambda,\eta)\in\Lambda\}
\quad(\lambda\in A_i).
$$
Then~$A_i=c_i+T$~for every~$i\in I$.
Moreover, for every choice of~$\lambda_i\in A_i$,
the sets~$B_{i,\lambda_i}$~are pairwise disjoint,
and their union is a spectrum of~$\nu$.
\end{thm}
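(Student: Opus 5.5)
The plan is to work entirely with the standard completeness criterion: a countable set $\Lambda$ is a spectrum of a probability measure $\rho$ if and only if $Q_\Lambda(\xi):=\sum_{\lambda\in\Lambda}|\widehat\rho(\xi-\lambda)|^2\equiv 1$. For an orthogonal set, Bessel's inequality gives $Q_\Lambda\le 1$. No compact support is needed for this criterion. The \emph{sufficiency} direction is immediate: if $T$ is a spectrum of $\mu$ and $\Gamma$ one of $\nu$, then $T\times\Gamma$ is a spectrum of $\mu\times\nu$, since $\widehat{\mu\times\nu}(x,y)=\widehat\mu(x)\widehat\nu(y)$ and $Q$ factorizes. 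For the necessity and the block structure, fix a spectrum $\Lambda$ of $\mu\times\nu$ and write $f_\lambda(y)=\sum_{\eta\in B_{\lambda}}|\widehat\nu(y-\eta)|^2$ and $h_i(x)=\sum_{\lambda\in A_i}|\widehat\mu(x-\lambda)|^2$.

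\textbf{Step 1: orthogonality facts.} Two distinct points $(\lambda,\eta),(\lambda',\eta')\in\Lambda$ satisfy $\lambda-\lambda'\in T\setminus\{0\}$ or $\eta-\eta'\in\mathcal Z(\widehat\nu)$. Hence:
(a) Each $B_\lambda$ is orthogonal for $\nu$, taking $\lambda=\lambda'$.
(b) If $\lambda\in A_i$ and $\lambda'\in A_j$ with $i\neq j$, then $\lambda-\lambda'\notin T$. So $B_\lambda\cap B_{\lambda'}=\emptyset$, because a common $\eta$ would give a zero difference in the second coordinate. Moreover, all cross differences lie in $\mathcal Z(\widehat\nu)$.
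Consequently, for any choice $\lambda_i\in A_i$, the sets $B_{i,\lambda_i}$ are pairwise disjoint and their union is orthogonal for $\nu$. Bessel's inequality then gives $\sum_i f_{\lambda_i}(y)\le 1$. Since $T-T=T$, every coset $c_i+T$ is orthogonal for $\mu$, so $h_i\le1$.

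\textbf{Step 2: the squeeze.} Set $F_i(y)=\sup_{\lambda\in A_i}f_\lambda(y)$. Applying Step 1 to near-maximizing choices at a fixed $y$, and passing through finite subsums, gives $\sum_iF_i(y)\le1$. The completeness identity for $\Lambda$ reads
\[
1=\sum_i\sum_{\lambda\in A_i}|\widehat\mu(x-\lambda)|^2f_\lambda(y)\le\sum_i h_i(x)F_i(y)\le\sum_iF_i(y)\le1 .
\]
So all inequalities are equalities for every $(x,y)$. From the middle equality, $h_i(x)=1$ whenever $F_i(y)>0$. Since $F_i(\eta)\ge f_\lambda(\eta)\ge1$ at any $\eta\in B_\lambda$, we get $h_i\equiv1$. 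Thus $A_i$ is a spectrum of $\mu$. It is contained in the orthogonal set $c_i+T$, and a complete orthonormal family cannot be enlarged, so $A_i=c_i+T$. Translating, $T$ is a spectrum of $\mu$.

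\textbf{Step 3: completeness of the sections.} From the first equality, $\sum_i\sum_{\lambda\in A_i}|\widehat\mu(x-\lambda)|^2\bigl(F_i(y)-f_\lambda(y)\bigr)=0$ with nonnegative terms. Taking $x=\lambda$ makes the weight $|\widehat\mu(0)|^2=1$, so $f_\lambda=F_i$ for every $\lambda\in A_i$. Therefore, for any choice $\lambda_i\in A_i$, we have $\sum_i f_{\lambda_i}\equiv\sum_iF_i\equiv1$. Hence $\bigcup_iB_{i,\lambda_i}$ is a spectrum of $\nu$, which is disjoint by Step 1, and in particular $\nu$ is spectral.

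The main obstacle is Step 2. The naive bound $\sum_{\lambda\in\pi_1(\Lambda)}|\widehat\mu(x-\lambda)|^2\le1$ is unavailable, because different cosets are not mutually orthogonal for $\mu$. The trick is to bound the $\nu$-side by $F_i$ instead. This uses the fact that picking one $\lambda$ per coset always gives a $\nu$-orthogonal set. Some care is needed in justifying $\sum_iF_i\le1$, since the supremum is taken pointwise in $y$. The argument handles this by bounding finite partial sums, where the near-maximizers can be chosen simultaneously.
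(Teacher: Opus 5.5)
Your proof is correct and follows the paper's argument essentially step for step. The paper packages your Step 2 squeeze as Lemma~\ref{lem2.3} and your Steps 2--3 as the general blockwise Theorem~\ref{thm2.9}, and uses the same ingredients: the finite-selection bound $\sum_i M_i\le 1$, evaluation at $\xi=-\lambda$ to get $q_{i,\lambda}=M_i$, evaluation at a point of a section to force $Q_i\equiv 1$, and the same maximality argument for $A_i=c_i+T$. The only differences are notational (the sign convention $\widehat\mu(x-\lambda)$ and inlining the lemmas).
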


Neither factor in Theorem~\ref{thm1.3} is required to have
compact support.
Applying this theorem successively to the one-dimensional
interval factors gives the corresponding result for cubes:
for every Borel probability measure~$\nu$~on~$\Bbb{R}^m$,
the measure~$L_{[0,1]^d}\times\nu$~is spectral if and only if
~$\nu$~is spectral.
\\
\indent
The proofs use the Jorgensen--Pedersen criterion and equality in Bessel's inequality.
For suitable nonnegative quantities~$a_\lambda,b_\lambda$~and~$M_i=\sup_{\lambda\in A_i}b_\lambda$, the orthogonality properties give
$$1=\sum_i\sum_{\lambda\in A_i}a_\lambda b_\lambda
\leq\sum_iM_i\sum_{\lambda\in A_i}a_\lambda
\leq\sum_iM_i\leq1.$$
Equality throughout forces the relevant Fourier sums to attain their Bessel bounds, which gives completeness for the factors.
For convolution, compact support of~$\mu$~provides the analytic continuation needed in the proof, while~$\nu$~only requires continuity of its Fourier transform.
For products, the two frequency variables can be chosen independently, so no analyticity is needed.
For convolution with a cube, we remove one coordinate interval factor at a time and use continuity to extend a Fourier-sum identity from a dense set.
\\
\indent
The rest of the paper is organized as follows. In Subsection~\ref{sect.2.1}, we collect the Fourier-sum lemmas used in the proofs.
Subsection~\ref{sect.2.2} contains the convolution theorem and its applications.
In Subsection~\ref{sect.2.3}, we prove the general product characterization and derive the subgroup and cube factor results.

\section{\bf Fourier-sum lemmas and factor theorems\label{sect.2}}
\setcounter{equation}{0}
\subsection{Preliminary lemmas\label{sect.2.1}}
\begin{lem}\label{lem2.1}
Let~$\mu$~be a Borel probability measure on~$\R^d$.
If~$|\widehat\mu(k)|=1$~for every~$k\in\Bbb{Z}^d$, then~$supp\mu\subset a+\Bbb{Z}^d$~for some~$a\in[0,1)^d$.
\end{lem}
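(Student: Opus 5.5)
The plan is to use the equality case of the triangle inequality one coordinate at a time. Let $e_j$ denote the $j$-th standard basis vector. The hypothesis gives
$$
\Bigl|\int e^{-2\pi i x_j}\,d\mu(x)\Bigr| = 1 \quad\text{for each } j=1,\dots,d .
$$
Since $\mu$ is a probability measure and $|e^{-2\pi i x_j}|=1$, equality in $\bigl|\int f\,d\mu\bigr|\le\int|f|\,d\mu$ forces $e^{-2\pi i x_j}$ to be $\mu$-a.e.\ equal to a single unimodular constant. Concretely, write $\widehat\mu(e_j)=e^{-2\pi i a_j}$ with $a_j\in[0,1)$. Then
$$
\int \bigl(1-\cos(2\pi(x_j-a_j))\bigr)\,d\mu(x)=1-\mathrm{Re}\bigl(e^{2\pi i a_j}\widehat\mu(e_j)\bigr)=0 .
$$
The integrand is nonnegative and continuous, so it vanishes on $supp\mu$. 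This gives $x_j-a_j\in\Bbb Z$ for every $x\in supp\mu$.

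To make the support step precise: the set $\{x: 1-\cos(2\pi(x_j-a_j))>0\}$ is open and has $\mu$-measure zero, hence it is disjoint from $supp\mu$. Intersecting the resulting conditions over $j=1,\dots,d$ shows $supp\mu\subset a+\Bbb Z^d$ with $a=(a_1,\dots,a_d)\in[0,1)^d$.

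Only the values $k=e_j$ are actually needed; the hypothesis at the remaining lattice points is automatically consistent with this conclusion. I do not expect a real obstacle here. The one point needing care is the passage from a $\mu$-a.e.\ statement to a statement about the closed support, and the continuity and open-set argument above handles it.
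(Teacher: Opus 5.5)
Your proof is correct and is essentially the paper's argument. The paper also uses only $k=u_j$: it integrates $|e^{-2\pi i x_j}-z_j|^2$, which equals $2\bigl(1-\cos(2\pi(x_j-a_j))\bigr)$, so it is your integrand up to a factor of $2$. The only difference is the final step, where the paper passes from ``$\mu$-almost everywhere'' to the support using that $a+\mathbb{Z}^d$ is closed with full measure, rather than your equivalent open-set argument.
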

\begin{proof}
Let~$u_1,\cdots,u_d$~be the standard coordinate vectors of~$\R^d$.
For each~$j=1,\cdots,d$, put
$$z_j=\widehat\mu(u_j)=\int_{\R^d}e^{-2\pi ix_j}\,d\mu(x).$$
The hypothesis gives~$|z_j|=1$. We first show that~$e^{-2\pi ix_j}=z_j$~for~$\mu$-almost every~$x$.
Indeed,
$$|e^{-2\pi ix_j}-z_j|^2
=1-\overline{z_j}e^{-2\pi ix_j}-z_je^{2\pi ix_j}+|z_j|^2.$$
Since~$\int e^{2\pi ix_j}\,d\mu(x)=\overline{z_j}$, integration gives
$$\int_{\R^d}|e^{-2\pi ix_j}-z_j|^2\,d\mu(x)=1-|z_j|^2=0.$$
The integrand is nonnegative, so it vanishes~$\mu$-almost everywhere.
\\
\indent
As~$|z_j|=1$, choose~$a_j\in[0,1)$~such that~$z_j=e^{-2\pi ia_j}$.
Explicitly, if~$z_j=e^{i\theta_j}$, one may take
$$a_j=-\frac{\theta_j}{2\pi}-\left\lfloor-\frac{\theta_j}{2\pi}\right\rfloor.$$
This choice is unique in~$[0,1)$, since two such choices differ by an integer.
It follows that~$e^{-2\pi i(x_j-a_j)}=1$~for~$\mu$-almost every~$x$, and hence
$$\mu(E_j)=1,\qquad E_j=\{x\in\Bbb{R}^d:x_j\in a_j+\Z\}.$$
Let~$a=(a_1,\cdots,a_d)$. Then~$\bigcap_{j=1}^dE_j=a+\Bbb{Z}^d$, and
$$\mu\big(\Bbb{R}^d\setminus(a+\Bbb{Z}^d)\big)
\leq\sum_{j=1}^d\mu(E_j^c)=0.$$
Finally,~$a+\Bbb{Z}^d$~is closed. Every point outside this set has a neighborhood of~$\mu$-measure zero, so~$supp\mu\subset a+\Bbb{Z}^d$.
\end{proof}

The first two assertions below are the Jorgensen--Pedersen
criterion for arbitrary Borel probability measures;
see~\cite[Theorem~2.6]{LMW24}.
We also record the continuity of the associated Fourier sum
and its analytic extension in the compactly supported case.
\begin{lem}[\cite{JP98,LMW24}]\label{lem2.2}
Let~$\mu$~be a Borel probability measure on~$\Bbb{R}^d$, and let~$\Lambda\subset\Bbb{R}^d$~be countable. Put
$$Q_{\mu,\Lambda}(\xi)=\sum_{\lambda\in\Lambda}|\widehat\mu(\xi+\lambda)|^2.$$
\begin{itemize}
\item[(i)]~$\Lambda$~is an orthogonal set for~$\mu$~if and only if~$Q_{\mu,\Lambda}(\xi)\leq1$~for every~$\xi\in\Bbb{R}^d$.
\item[(ii)]~$\Lambda$~is a spectrum of~$\mu$~if and only if~$Q_{\mu,\Lambda}(\xi)=1$~for every~$\xi\in\Bbb{R}^d$.
\item[(iii)] If~$\Lambda$~is orthogonal for~$\mu$, then~$Q_{\mu,\Lambda}$~is continuous on~$\Bbb{R}^d$.
If, in addition,~$\mu$~is compactly supported, then~$Q_{\mu,\Lambda}$~extends to an entire function on~$\mathbb{C}^d$.
\end{itemize}
\end{lem}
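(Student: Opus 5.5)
The statement immediately above is Lemma~\ref{lem2.2}. Parts (i) and (ii) are the Jorgensen--Pedersen criterion, and part (iii) records regularity of the Fourier sum. For (i) and (ii) the plan is to use Bessel's inequality and Parseval's identity in $L^2(\mu)$, applied to the test functions $e_{-\xi}$. Since $\langle e_{-\xi},e_\lambda\rangle_{L^2(\mu)}=\int e^{-2\pi i\langle \xi+\lambda,x\rangle}d\mu(x)=\widehat\mu(\xi+\lambda)$ and $\|e_{-\xi}\|^2=1$, orthonormality of $E(\Lambda)$ gives $Q_{\mu,\Lambda}(\xi)\le 1$ by Bessel. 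Completeness gives $Q_{\mu,\Lambda}(\xi)=1$ by Parseval.

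For the converse in (i), I would fix $\lambda_0\in\Lambda$ and take $\xi=-\lambda_0$. Then the term $\lambda=\lambda_0$ already equals $|\widehat\mu(0)|^2=1$, so every other term must vanish. This gives $\widehat\mu(\lambda-\lambda_0)=0$ for all $\lambda\ne\lambda_0$, which is \eqref{(1.1)}.

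For the converse in (ii), orthogonality follows from (i). Equality $Q_{\mu,\Lambda}(\xi)=1$ then says that every exponential $e_{\xi}$ lies in the closed span $\overline{\mathrm{span}}\,E(\Lambda)$, since Bessel is attained exactly for vectors in that span. It then remains to show that the exponentials span a dense subspace of $L^2(\mu)$ for an arbitrary Borel probability measure. Suppose $f\in L^2(\mu)$ is orthogonal to all $e_\xi$. Then the complex measure $f\,d\mu$, which is finite because $f\in L^1(\mu)$, has identically zero Fourier transform. By uniqueness of Fourier--Stieltjes transforms, $f\,d\mu=0$, so $f=0$ $\mu$-a.e. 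This density step is the only non-formal ingredient, and I expect it to be the main point requiring care; no compact support is needed here.

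For (iii), each term $|\widehat\mu(\xi+\lambda)|^2$ is continuous, but the sum is infinite, so I would not argue termwise. Instead I would use the identity $Q_{\mu,\Lambda}(\xi)=\sum_\lambda|\langle e_{-\xi},e_\lambda\rangle|^2=\|P e_{-\xi}\|^2$, where $P$ is the orthogonal projection onto $\overline{\mathrm{span}}\,E(\Lambda)$. The map $\xi\mapsto e_{-\xi}\in L^2(\mu)$ is continuous by dominated convergence, and this gives continuity of $Q_{\mu,\Lambda}$.

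For the analytic extension when $\mathrm{supp}\,\mu\subset B(0,R)$, I would write $|\widehat\mu(\xi+\lambda)|^2=\widehat\mu(\xi+\lambda)\overline{\widehat\mu(\xi+\lambda)}$. I would then extend it as $F_\lambda(z)=\widehat\mu(z+\lambda)\,\widetilde{\mu}(z+\lambda)$, where $\widetilde\mu(z)=\int e^{2\pi i\langle x,z\rangle}d\mu(x)$. Both factors are entire because the support is compact, and $F_\lambda$ agrees with $|\widehat\mu(\xi+\lambda)|^2$ on real $\xi$.

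To sum the $F_\lambda$, I would put $z=\xi+i\eta$ and use $F_\lambda(z)=\langle g_z,e_\lambda\rangle\langle e_\lambda,h_z\rangle$. Here $g_z(x)=e^{-2\pi i\langle z,x\rangle}$ and $h_z(x)=\overline{e^{2\pi i\langle z,x\rangle}}$, and both lie in $L^2(\mu)$ with norm at most $e^{2\pi R|\eta|}$. Cauchy--Schwarz together with Bessel then gives $\sum_\lambda|F_\lambda(z)|\le e^{4\pi R|\eta|}$. So the series converges locally uniformly on $\mathbb C^d$, and by Weierstrass's theorem its sum is entire.
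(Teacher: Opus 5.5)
The paper does not prove this lemma; it quotes it from Jorgensen--Pedersen and from Theorem~2.6 of [LMW24]. Your arguments for (i), (ii) and the continuity half of (iii) are correct and are the standard ones. These are: Bessel and Parseval applied to $e_{-\xi}$; the choice $\xi=-\lambda_0$ for the converse of (i); density of exponentials in $L^2(\mu)$; and the identity $Q_{\mu,\Lambda}(\xi)=\|Pe_{-\xi}\|^2$ together with continuity of $\xi\mapsto e_{-\xi}$ in $L^2(\mu)$. In the compactly supported setting, density follows from Stone--Weierstrass. Your use of uniqueness of the Fourier--Stieltjes transform of the finite measure $f\,d\mu$ is exactly what is needed to handle a general Borel probability measure.

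One step in the entire extension does not follow as written. Your estimate $\sum_\lambda|F_\lambda(z)|\le e^{4\pi R|\eta|}$ bounds the sum at each point. A locally uniform bound on the sum is not the same as locally uniform convergence of the series: the Weierstrass $M$-test would need $\sum_\lambda\sup_K|F_\lambda|<\infty$, which you have not shown. So Weierstrass's theorem cannot be invoked yet.

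The repair is short. One option is Vitali's theorem (Montel plus pointwise convergence). Along an enumeration of $\Lambda$, the finite partial sums are entire. By your estimate they are bounded on each compact $K$ by $e^{4\pi R\sup_K|\eta|}$, and they converge pointwise, so they converge locally uniformly to an entire function. The other option is to bound the tails directly: for finite $F\subset\Lambda$,
\[
\sum_{\lambda\notin F}|F_\lambda(z)|\le\|h_z\|\Big(\|Pg_z\|^2-\sum_{\lambda\in F}|\langle g_z,e_\lambda\rangle|^2\Big)^{1/2}.
\]
The map $z\mapsto g_z$ is continuous into $L^2(\mu)$, by dominated convergence using the compact support. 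Hence, along finite sets $F_N\uparrow\Lambda$, the continuous partial sums increase to the continuous function $\|Pg_z\|^2$. Dini's theorem then makes the tails uniformly small on compact sets.
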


\begin{lem}\label{lem2.3}
Let~$A=\bigcup_{i\in I}A_i$~be a countable set partitioned into nonempty blocks.
Suppose that~$a_\lambda,b_\lambda\geq0$, and put~$M_i=\sup_{\lambda\in A_i}b_\lambda$.
Assume that
$$\sum_{\lambda\in A_i}a_\lambda\leq1,\qquad
\sum_iM_i\leq1,\qquad
\sum_i\sum_{\lambda\in A_i}a_\lambda b_\lambda=1.$$
Then~$\sum_iM_i=1$~and, for every~$i\in I$~and~$\lambda\in A_i$,
\begin{equation}\label{(2.1)}
a_\lambda(M_i-b_\lambda)=0,\qquad
M_i\left(1-\sum_{\lambda\in A_i}a_\lambda\right)=0.
\end{equation}
In particular,~$a_\lambda>0$~implies~$b_\lambda=M_i$, and~$M_i>0$~implies~$\sum_{\lambda\in A_i}a_\lambda=1$.
\end{lem}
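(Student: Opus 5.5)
The plan is to run the chain of inequalities sketched in the introduction and then read off equality in each step. Since $b_\lambda\le M_i$ for $\lambda\in A_i$ and $a_\lambda\ge0$, we have
$$1=\sum_i\sum_{\lambda\in A_i}a_\lambda b_\lambda\le\sum_i M_i\sum_{\lambda\in A_i}a_\lambda\le\sum_i M_i\le 1.$$
The second inequality uses $\sum_{\lambda\in A_i}a_\lambda\le1$ and $M_i\ge0$. Note that $M_i<\infty$ because $\sum_iM_i\le1$; this makes all the products finite. Everything is a sum of nonnegative terms, so rearrangement is harmless. Hence every inequality in the chain is an equality, and in particular $\sum_iM_i=1$.

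Next I convert the two equalities into termwise statements. From the first equality,
$$\sum_i\sum_{\lambda\in A_i}a_\lambda(M_i-b_\lambda)=0.$$
Each term is nonnegative and the double sum is finite, since it equals the difference of two finite quantities, both bounded by $1$. Therefore every term vanishes, giving $a_\lambda(M_i-b_\lambda)=0$. From the second equality,
$$\sum_i M_i\Bigl(1-\sum_{\lambda\in A_i}a_\lambda\Bigr)=0,$$
again a sum of nonnegative terms, so each term is zero. This yields \eqref{(2.1)}.

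The "in particular" claims then follow by dividing out the positive factor: if $a_\lambda>0$ then $b_\lambda=M_i$, and if $M_i>0$ then $\sum_{\lambda\in A_i}a_\lambda=1$.

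The only point needing care is justifying the subtraction of sums. I will handle it by noting that all series involved converge absolutely, being bounded by $1$, so splitting $\sum a_\lambda M_i-\sum a_\lambda b_\lambda$ into one series is legitimate. No real obstacle is expected.
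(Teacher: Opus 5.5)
Your proposal is correct and is essentially the paper's argument. The paper writes $0=1-\sum_i\sum_{\lambda\in A_i}a_\lambda b_\lambda$ directly as the sum of the three nonnegative deficits $\bigl(1-\sum_iM_i\bigr)$, $\sum_iM_i(1-s_i)$ with $s_i=\sum_{\lambda\in A_i}a_\lambda$, and $\sum_i\sum_{\lambda\in A_i}a_\lambda(M_i-b_\lambda)$. This is exactly your chain of inequalities in telescoped form, and your finiteness remark is the justification the paper leaves implicit.
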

\begin{proof}
Set
\[
s_i:=\sum_{\lambda\in A_i}a_\lambda\leq1.
\]
By the assumptions,
\[
\begin{aligned}
0
&=1-\sum_i\sum_{\lambda\in A_i}a_\lambda b_\lambda\\
&=\left(1-\sum_iM_i\right)
  +\sum_iM_i(1-s_i)
  +\sum_i\sum_{\lambda\in A_i}
       a_\lambda(M_i-b_\lambda).
\end{aligned}
\]
All terms on the right-hand side are nonnegative, since
$\sum_iM_i\leq1$, $s_i\leq1$, and $b_\lambda\leq M_i$
for $\lambda\in A_i$.
Consequently,
\[
\sum_iM_i=1,\qquad
M_i(1-s_i)=0,\qquad
a_\lambda(M_i-b_\lambda)=0
\quad(\lambda\in A_i).
\]
Thus $a_\lambda>0$ implies $b_\lambda=M_i$, and
$M_i>0$ implies $s_i=1$, as required.
\end{proof}

\subsection{Convolution measures\label{sect.2.2}}
We now apply Lemmas~\ref{lem2.2} and~\ref{lem2.3} to convolution measures.
Under the subgroup assumption below, each coset block of a convolution spectrum is orthogonal for the first factor, while every choice of one representative from each block is orthogonal for the second factor.
Equality in the corresponding Bessel estimates gives completeness and the following characterization.

\begin{thm}\label{thm2.4}
Let~$\mu$~be a compactly supported Borel probability measure on~$\Bbb{R}^d$, and let~$\nu$~be a Borel probability measure on~$\Bbb{R}^d$.
Suppose that~$\mathcal{Z}(\widehat\mu)\cup\{0\}=T$~for an additive subgroup~$T\subset\Bbb{R}^d$.
Define
$$T^\perp=\{x\in\Bbb{R}^d:\langle t,x\rangle\in\Bbb{Z}\text{ for all }t\in T\}.$$
Then~$\mu*\nu$~is spectral if and only if~$T$~is a spectrum of~$\mu$, ~$\nu$~is spectral, and
$$supp\nu\subset a+T^\perp\qquad\text{for some }a\in\Bbb{R}^d.$$
In this case, every nonempty block~$\Lambda\cap(c+T)$~of any spectrum~$\Lambda$~of~$\mu*\nu$~equals~$c+T$, and every choice of one point from each nonempty block is a spectrum of~$\nu$.
Moreover,
\begin{equation}\label{(2.2)}
|\widehat\nu(\xi+t)|^2=|\widehat\nu(\xi)|^2\qquad(\xi\in\Bbb{R}^d,~t\in T).
\end{equation}
Under these equivalent conditions,~$\Gamma+T$~is a spectrum of~$\mu*\nu$~for every spectrum~$\Gamma$~of~$\nu$.
\end{thm}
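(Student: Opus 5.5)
The plan is to prove the necessity direction via Lemma~\ref{lem2.3} applied to the Jorgensen--Pedersen sums, and sufficiency by a direct computation.

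Let $\Lambda$ be a spectrum of $\eta=\mu*\nu$. Partition $\Lambda$ into the nonempty blocks $A_i=\Lambda\cap(c_i+T)$.

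\textbf{Orthogonality inside and across blocks.} Two distinct points of one block differ by a nonzero element of $T$. This difference lies in $\mathcal{Z}(\widehat\mu)$, so each $A_i$ is orthogonal for $\mu$. Two points from different blocks differ by an element outside $T$. That difference is not a zero of $\widehat\mu$, hence by \eqref{(1.2)} it must be a zero of $\widehat\nu$. Consequently every choice $\Gamma=\{\lambda_i\}$ of one point per block is orthogonal for $\nu$.

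\textbf{Applying Lemma~\ref{lem2.3}.} Fix $\xi$ and put $a_\lambda=|\widehat\mu(\xi+\lambda)|^2$ and $b_\lambda=|\widehat\nu(\xi+\lambda)|^2$. Lemma~\ref{lem2.2}(ii) for $\eta$ gives $\sum a_\lambda b_\lambda=1$, and Lemma~\ref{lem2.2}(i) for the blocks gives $\sum_{\lambda\in A_i}a_\lambda\le1$. To get $\sum_i M_i\le1$, choose near-maximizers $\lambda_i\in A_i$. Then $\sum_i b_{\lambda_i}\le1$ by orthogonality of $\{\lambda_i\}$ for $\nu$, and taking suprema yields $\sum_iM_i\le1$.

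Lemma~\ref{lem2.3} then gives $\sum_i M_i=1$, together with $\sum_{\lambda\in A_i}a_\lambda=1$ whenever $M_i>0$. Also $b_\lambda=M_i$ whenever $a_\lambda>0$.

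\textbf{Main obstacle: removing the exceptional sets.} The identity $\sum_{\lambda\in A_i}|\widehat\mu(\xi+\lambda)|^2=1$ is so far known only for $\xi$ with $M_i(\xi)>0$. Here I use compact support of $\mu$. By Lemma~\ref{lem2.2}(iii), $Q_{\mu,A_i}$ is real-analytic, since it is the restriction of an entire function. Since $\widehat\nu(\xi+\lambda)$ is continuous and equals $1$ at $\xi=-\lambda$, the set $\{M_i>0\}$ is a nonempty open set. Hence $Q_{\mu,A_i}\equiv1$, so $A_i$ is a spectrum of $\mu$, and in particular $\mu$ is spectral.

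Since $A_i\subset c_i+T$ and $A_i-c_i$ is a spectrum of $\mu$ contained in the orthogonal set $T$ (orthogonal because $T-T=T$), completeness forces $A_i=c_i+T$. Thus $T$ is a spectrum of $\mu$.

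\textbf{Periodicity of $|\widehat\nu|^2$ and spectrality of $\nu$.} For each $\lambda\in A_i$, the relation $b_\lambda=M_i$ holds wherever $a_\lambda>0$. The zeros of $\widehat\mu(\xi+\lambda)$ form a discrete set, so this set is dense, and continuity gives $b_\lambda(\xi)=M_i(\xi)$ for all $\xi$. As the block is a full coset, this yields \eqref{(2.2)}.

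Now $\sum_i b_{\lambda_i}=\sum_iM_i=1$ for every $\xi$, so any choice $\{\lambda_i\}$ is a spectrum of $\nu$ by Lemma~\ref{lem2.2}(ii).

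\textbf{Support condition.} From \eqref{(2.2)} at $\xi=0$ we get $|\widehat\nu(t)|=1$ for all $t\in T$. The argument of Lemma~\ref{lem2.1}, carried out for a basis of the discrete group $T$, gives $e^{-2\pi i\langle t,x\rangle}=\widehat\nu(t)$ for $\nu$-a.e.\ $x$, for each $t$ in a basis of $T$. Hence $\operatorname{supp}\nu\subset a+T^\perp$. (Alternatively, apply Lemma~\ref{lem2.1} after a linear change of variables when $T$ is a full lattice.)

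\textbf{Sufficiency.} If $\operatorname{supp}\nu\subset a+T^\perp$, then $\widehat\nu(\xi+t)=e^{-2\pi i\langle a,t\rangle}\widehat\nu(\xi)$, so $|\widehat\nu|^2$ is $T$-periodic. For a spectrum $\Gamma$ of $\nu$,
$$Q_{\eta,\Gamma+T}(\xi)=\sum_{\gamma}|\widehat\nu(\xi+\gamma)|^2\sum_{t}|\widehat\mu(\xi+\gamma+t)|^2=\sum_\gamma|\widehat\nu(\xi+\gamma)|^2=1,$$
using that $T$ is a spectrum of $\mu$. Distinctness of the points $\gamma+t$ follows from orthogonality: if $\gamma-\gamma'\in T\setminus\{0\}$, then $|\widehat\nu(\gamma-\gamma')|=|\widehat\nu(0)|=1\neq0$, which is impossible. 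So $\Gamma+T$ is a spectrum of $\mu*\nu$ by Lemma~\ref{lem2.2}(ii).
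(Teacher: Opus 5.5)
Your proposal is correct and follows essentially the paper's proof: the same coset partition, the same application of Lemma~\ref{lem2.3}, analytic continuation of $Q_i$ from the open set $\{M_i>0\}$, density plus continuity on the $\nu$-side, the full-coset completeness argument, the Lemma~\ref{lem2.1} argument for the support, and the same sufficiency computation; you only reorder, proving $Q_i\equiv1$ before identifying $M_i$. One step needs tightening: when you extend $b_\lambda=M_i$ from the dense set $\{a_\lambda>0\}$ to all $\xi$, continuity of $b_\lambda$ alone is not enough, because $M_i$ is not yet known to be continuous, so either use that $M_i$ is lower semicontinuous (a supremum of continuous functions, with $b_\lambda\le M_i$), or, as the paper does, show $b_\lambda\le b_{\lambda_0}$ on the dense set where $b_{\lambda_0}=M_i$, extend this by continuity of both functions, and then take the supremum over $\lambda$.
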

\begin{proof}
We prove necessity in five steps. Let~$\Lambda$~be a spectrum of~$\mu*\nu$.

\noindent\textbf{Step 1: Orthogonality and the Fourier-sum bounds.}
Partition~$\Lambda$~modulo~$T$~as
$$\Lambda=\bigcup_{i\in I}\Lambda_i,\qquad
\Lambda_i=\Lambda\cap(c_i+T)\neq\emptyset,$$
where the cosets~$c_i+T$~are pairwise distinct.
Distinct elements of the same block differ by an element of~$T\setminus\{0\}$, so each~$\Lambda_i$~is orthogonal for~$\mu$.
If~$\lambda\in\Lambda_i$~and~$\lambda'\in\Lambda_j$~with~$i\neq j$, then~$\widehat\mu(\lambda-\lambda')\neq0$.
Orthogonality for~$\mu*\nu$~therefore gives~$\widehat\nu(\lambda-\lambda')=0$.
Thus every choice~$\lambda_i\in\Lambda_i$~gives an orthogonal set~$\{\lambda_i:i\in I\}$~for~$\nu$.
For~$\xi\in\Bbb{R}^d$, define
$$Q_i(\xi)=\sum_{\lambda\in\Lambda_i}|\widehat\mu(\xi+\lambda)|^2,
\qquad M_i(\xi)=\sup_{\lambda\in\Lambda_i}|\widehat\nu(\xi+\lambda)|^2.$$
Lemma~\ref{lem2.2} gives~$Q_i(\xi)\leq1$.
We claim that~$\sum_{i\in I}M_i(\xi)\leq1$.
Fix~$\xi\in\Bbb{R}^d$, a finite nonempty subset~$F\subset I$, and~$\varepsilon>0$.
For each~$i\in F$, choose~$\lambda_i^*\in\Lambda_i$~such that
$$M_i(\xi)<|\widehat\nu(\xi+\lambda_i^*)|^2+\frac{\varepsilon}{\#F}.$$
Since~$\{\lambda_i^*:i\in F\}$~is orthogonal for~$\nu$, we obtain
$$\sum_{i\in F}M_i(\xi)<\sum_{i\in F}|\widehat\nu(\xi+\lambda_i^*)|^2+\varepsilon\leq1+\varepsilon.$$
Letting~$\varepsilon\downarrow0$~and taking the supremum over all finite nonempty~$F$~proves the claim.
Since~$\Lambda$~is a spectrum of~$\mu*\nu$, Lemma~\ref{lem2.2} gives
\begin{equation}\label{(2.3)}
1=\sum_i\sum_{\lambda\in\Lambda_i}|\widehat\mu(\xi+\lambda)|^2|\widehat\nu(\xi+\lambda)|^2.
\end{equation}

\noindent\textbf{Step 2: A spectrum of~$\nu$.}
Applying Lemma~\ref{lem2.3} with~$a_\lambda=|\widehat\mu(\xi+\lambda)|^2$~and~$b_\lambda=|\widehat\nu(\xi+\lambda)|^2$, we get
\begin{equation}\label{(2.4)}
\sum_iM_i(\xi)=1,\qquad
|\widehat\mu(\xi+\lambda)|^2\big(M_i(\xi)-|\widehat\nu(\xi+\lambda)|^2\big)=0,
\end{equation}
and~$M_i(\xi)(1-Q_i(\xi))=0$.
Fix~$\lambda_0\in\Lambda_i$.
Since~$\mu$~is compactly supported,~$\widehat\mu$~is real analytic and is not identically zero.
Hence~$U=\{\xi:\widehat\mu(\xi+\lambda_0)\neq0\}$~is dense in~$\Bbb{R}^d$.
For any~$\lambda\in\Lambda_i$, equation~\eqref{(2.4)} gives
$$|\widehat\nu(\xi+\lambda)|^2\leq|\widehat\nu(\xi+\lambda_0)|^2\qquad(\xi\in U).$$
By continuity, this inequality holds for every~$\xi\in\Bbb{R}^d$.
Taking the supremum over~$\lambda$~and using~$\lambda_0\in\Lambda_i$, we obtain
\begin{equation}\label{(2.5)}
M_i(\xi)=|\widehat\nu(\xi+\lambda_0)|^2\qquad(\xi\in\Bbb{R}^d,~\lambda_0\in\Lambda_i).
\end{equation}
For every set~$\Gamma=\{\lambda_i:i\in I\}$, equations~\eqref{(2.4)}--\eqref{(2.5)} give
$$\sum_{\lambda\in\Gamma}|\widehat\nu(\xi+\lambda)|^2=1.$$
Thus~$\Gamma$~is a spectrum of~$\nu$~by Lemma~\ref{lem2.2}.

\noindent\textbf{Step 3: A spectrum of~$\mu$.}
Fix~$i\in I$~and~$\lambda_0\in\Lambda_i$.
By~\eqref{(2.5)},~$M_i(-\lambda_0)=1$.
Continuity gives~$M_i>0$~on a neighborhood of~$-\lambda_0$.
The identity~$M_i(1-Q_i)=0$~implies~$Q_i=1$~on this neighborhood.
Since~$\mu$~is compactly supported and~$\Lambda_i$~is orthogonal for~$\mu$, Lemma~\ref{lem2.2} implies that~$Q_i$~is real analytic on~$\Bbb{R}^d$.
The identity theorem yields~$Q_i\equiv1$, so~$\Lambda_i$~is a spectrum of~$\mu$.

\noindent\textbf{Step 4:~$T$~is a spectrum of~$\mu$.}
If~$u\in(c_i+T)\setminus\Lambda_i$, then
$$u-\lambda\in T\setminus\{0\}=\mathcal{Z}(\widehat\mu)\qquad(\lambda\in\Lambda_i).$$
Thus the unit vector~$e_u$~would be orthogonal to an orthonormal basis of~$L^2(\mu)$, a contradiction.
Hence~$\Lambda_i=c_i+T$.
Translation of a spectrum preserves spectrality, so~$T$~is a spectrum of~$\mu$.
Applying~\eqref{(2.5)} to~$c_i$~and~$c_i+t$~in a fixed block and translating~$\xi$~gives~\eqref{(2.2)}.

\noindent\textbf{Step 5: The support of~$\nu$.}
Taking~$\xi=0$~in~\eqref{(2.2)} gives~$|\widehat\nu(t)|=1$~for every~$t\in T$.
By the argument in Lemma~\ref{lem2.1}, for each~$t\in T$,
$$e^{-2\pi i\langle t,x\rangle}=\widehat\nu(t)\qquad\text{for }\nu\text{-almost every }x.$$
By continuity, the equality holds at every point of~$supp\nu$.
Fix~$a\in supp\nu$. Then
$$e^{-2\pi i\langle t,x-a\rangle}=1\qquad(t\in T,~x\in supp\nu).$$
Hence~$\langle t,x-a\rangle\in\Z$, and therefore~$supp\nu\subset a+T^\perp$.

For sufficiency, suppose that~$T$~is a spectrum of~$\mu$, ~$\nu$~is spectral, and~$supp\nu\subset a+T^\perp$.
Let~$\Gamma$~be any spectrum of~$\nu$.
For every~$t\in T$~and~$x\in supp\nu$, we have~$\langle t,x-a\rangle\in\Z$.
Consequently,
$$\widehat\nu(\xi+t)=e^{-2\pi i\langle t,a\rangle}\widehat\nu(\xi)\qquad(\xi\in\Bbb{R}^d,~t\in T).$$
In particular,~$|\widehat\nu|^2$~is~$T$-periodic and~$|\widehat\nu(t)|=1$~for~$t\in T$.
If~$\gamma,\gamma'\in\Gamma$~are distinct, then~$\widehat\nu(\gamma-\gamma')=0$, so~$\gamma-\gamma'\notin T$.
Thus~$\Gamma+T=\bigcup_{\gamma\in\Gamma}(\gamma+T)$.
For every~$\xi\in\Bbb{R}^d$,
\begin{align*}
\sum_{\lambda\in\Gamma+T}|\widehat{\mu*\nu}(\xi+\lambda)|^2
&=\sum_{\gamma\in\Gamma}\sum_{t\in T}|\widehat\mu(\xi+\gamma+t)|^2|\widehat\nu(\xi+\gamma+t)|^2\\
&=\sum_{\gamma\in\Gamma}|\widehat\nu(\xi+\gamma)|^2
  \sum_{t\in T}|\widehat\mu(\xi+\gamma+t)|^2\\
&=\sum_{\gamma\in\Gamma}|\widehat\nu(\xi+\gamma)|^2=1.
\end{align*}
Lemma~\ref{lem2.2} shows that~$\Gamma+T$~is a spectrum of~$\mu*\nu$.
\end{proof}

The proof of Theorem~\ref{thm2.4} also gives a characterization for general partitions whose blocks are orthogonal for~$\mu$~and for which every choice
of one representative from each block forms an
orthogonal set for~$\nu$.
\begin{cor}\label{cor2.5}
Let~$\mu$~be a compactly supported Borel probability measure on~$\Bbb{R}^d$, and let~$\nu$~be a Borel probability measure on~$\Bbb{R}^d$.
Let~$\Lambda=\bigcup_{i\in I}\Lambda_i$~be a nonempty countable set partitioned into nonempty blocks.
Suppose that each~$\Lambda_i$~is orthogonal for~$\mu$~and that, for every choice~$\lambda_i\in\Lambda_i$, the set~$\{\lambda_i:i\in I\}$~is orthogonal for~$\nu$.
Then~$\Lambda$~is a spectrum of~$\mu*\nu$~if and only if each~$\Lambda_i$~is a spectrum of~$\mu$~and every such transversal is a spectrum of~$\nu$.
In this case, for every~$i\in I$~and~$\xi\in\Bbb{R}^d$,
$$|\widehat\nu(\xi+\lambda)|^2=|\widehat\nu(\xi+\lambda')|^2\qquad(\lambda,\lambda'\in\Lambda_i).$$
\end{cor}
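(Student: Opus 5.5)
The plan is to rerun Steps 1--3 of the proof of Theorem~\ref{thm2.4}, with the subgroup hypothesis replaced by the structural hypotheses now imposed on the partition. For necessity, let $\Lambda$ be a spectrum of $\mu*\nu$ and fix $\xi\in\Bbb{R}^d$. Put
$$Q_i(\xi)=\sum_{\lambda\in\Lambda_i}|\widehat\mu(\xi+\lambda)|^2,\qquad M_i(\xi)=\sup_{\lambda\in\Lambda_i}|\widehat\nu(\xi+\lambda)|^2.$$
\begin{itemize}
\item[(a)] Since each $\Lambda_i$ is orthogonal for $\mu$, Lemma~\ref{lem2.2}(i) gives $Q_i\le1$.
\item[(b)] The bound $\sum_iM_i(\xi)\le1$ is proved exactly as in Step~1: on a finite $F\subset I$, choose near-maximizers $\lambda_i^*\in\Lambda_i$. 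By hypothesis $\{\lambda_i^*\}$ extends to a transversal that is orthogonal for $\nu$, so Lemma~\ref{lem2.2}(i) applies.
\item[(c)] Lemma~\ref{lem2.2}(ii) for $\mu*\nu$, together with \eqref{(1.2)}, gives the identity \eqref{(2.3)}.
\end{itemize}
Then Lemma~\ref{lem2.3}, with $a_\lambda=|\widehat\mu(\xi+\lambda)|^2$ and $b_\lambda=|\widehat\nu(\xi+\lambda)|^2$, yields $\sum_iM_i=1$, $a_\lambda(M_i-b_\lambda)=0$ and $M_i(1-Q_i)=0$.

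Next, fix $\lambda_0\in\Lambda_i$. Since $\mu$ has compact support, $\widehat\mu$ is real analytic and $\widehat\mu(0)=1$, so $\{\xi:\widehat\mu(\xi+\lambda_0)\ne0\}$ is dense. On this dense set, $|\widehat\nu(\xi+\lambda)|^2\le|\widehat\nu(\xi+\lambda_0)|^2$ for every $\lambda\in\Lambda_i$. By continuity of $\widehat\nu$ this holds everywhere, so $M_i(\xi)=|\widehat\nu(\xi+\lambda_0)|^2$ for all $\xi$ and every $\lambda_0\in\Lambda_i$. This gives the block-constancy identity claimed in the statement. It also shows that for any transversal $\{\lambda_i\}$ we have $\sum_i|\widehat\nu(\xi+\lambda_i)|^2=\sum_iM_i(\xi)=1$, so every transversal is a spectrum of $\nu$ by Lemma~\ref{lem2.2}(ii).

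To show each $\Lambda_i$ is a spectrum of $\mu$, use $M_i(-\lambda_0)=|\widehat\nu(0)|^2=1$. By continuity $M_i>0$ near $-\lambda_0$, so $Q_i=1$ on an open set. Lemma~\ref{lem2.2}(iii) says $Q_i$ extends to an entire function, so the identity theorem gives $Q_i\equiv1$. This is the main delicate point, and it is precisely where compact support of $\mu$ is needed.

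For sufficiency, assume each $\Lambda_i$ is a spectrum of $\mu$ and every transversal is a spectrum of $\nu$. The equality case of the necessity argument is not directly available here, so I would first derive the block-constancy of $|\widehat\nu|$. Fix $\lambda,\lambda'\in\Lambda_i$ and a transversal $\Gamma$ through $\lambda$, and let $\Gamma'$ be the transversal obtained by swapping $\lambda$ for $\lambda'$. Lemma~\ref{lem2.2}(ii) gives $\sum_{\Gamma}|\widehat\nu(\xi+\cdot)|^2=1=\sum_{\Gamma'}|\widehat\nu(\xi+\cdot)|^2$. The two sums differ only in the single term indexed by $i$, so $|\widehat\nu(\xi+\lambda)|^2=|\widehat\nu(\xi+\lambda')|^2$. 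Calling this common value $N_i(\xi)$, we get
$$\sum_{\lambda\in\Lambda}|\widehat{\mu*\nu}(\xi+\lambda)|^2=\sum_iN_i(\xi)Q_i(\xi)=\sum_iN_i(\xi)=1,$$
because $Q_i\equiv1$ and $\{N_i(\xi)\}$ is the Fourier sum along any transversal. Lemma~\ref{lem2.2}(ii) then shows $\Lambda$ is a spectrum of $\mu*\nu$. Note that this direction does not use compact support.
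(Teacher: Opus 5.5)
Your proposal is correct and follows the paper's proof essentially verbatim. For necessity you rerun Steps 1--3 of Theorem~\ref{thm2.4}: the finite-selection bound, Lemma~\ref{lem2.3}, density of the nonvanishing set of $\widehat\mu$, and the identity theorem for the entire extension of $Q_i$. For sufficiency you use the same swap-one-representative cancellation to obtain block-constancy of $|\widehat\nu|^2$ before summing, and your side remarks (a finite partial choice extends to a full transversal; sufficiency needs no compact support) are accurate.
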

\begin{proof}
For necessity, Steps~1--3 of the proof of Theorem~\ref{thm2.4} apply without change.
For sufficiency, choose~$\lambda_i\in\Lambda_i$~for each~$i\in I$.
Fix~$i\in I$~and~$\lambda\in\Lambda_i$.
Both~$\{\lambda_j:j\in I\}$~and~$\{\lambda_j:j\neq i\}\cup\{\lambda\}$~are spectra of~$\nu$.
Hence
$$|\widehat\nu(\xi+\lambda_i)|^2+\sum_{j\neq i}|\widehat\nu(\xi+\lambda_j)|^2
=1=|\widehat\nu(\xi+\lambda)|^2+\sum_{j\neq i}|\widehat\nu(\xi+\lambda_j)|^2.$$
The common series converges, so cancellation gives
$$|\widehat\nu(\xi+\lambda)|^2=|\widehat\nu(\xi+\lambda_i)|^2\qquad(\lambda\in\Lambda_i).$$
Since each~$\Lambda_i$~is a spectrum of~$\mu$, we obtain
\begin{align*}
\sum_{\lambda\in\Lambda}|\widehat{\mu*\nu}(\xi+\lambda)|^2
&=\sum_{i\in I}|\widehat\nu(\xi+\lambda_i)|^2
  \sum_{\lambda\in\Lambda_i}|\widehat\mu(\xi+\lambda)|^2=\sum_{i\in I}|\widehat\nu(\xi+\lambda_i)|^2=1.
\end{align*}
Lemma~\ref{lem2.2} proves the assertion.
\end{proof}

The following corollary strengthens the convolution-factor theorem of Wu~\cite[Theorem~1.2]{W24} by removing the finite-support and Fourier-periodicity assumptions on the second factor.
In fact, only the first factor is required to have compact support.
\begin{cor}\label{cor2.6}
Let~$\mu$~be a compactly supported Borel probability measure on~$\Bbb{R}^d$, and let~$\nu$~be a Borel probability measure on~$\Bbb{R}^d$.
Put~$\eta=\mu*\nu$~and~$D=\mathcal{Z}(\widehat\mu)\setminus\mathcal{Z}(\widehat\nu)$.
Assume that
\begin{equation}\label{cond1}
(D-D)\cap\mathcal{Z}(\widehat\eta)\subset D.
\end{equation}
If~$\eta$~is spectral, then both~$\mu$~and~$\nu$~are spectral.
\end{cor}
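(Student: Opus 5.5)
The plan is to reduce Corollary~\ref{cor2.6} to Corollary~\ref{cor2.5}. Given a spectrum $\Lambda$ of $\eta$, I would partition $\Lambda$ into blocks so that each block is orthogonal for $\mu$ and every transversal is orthogonal for $\nu$. Corollary~\ref{cor2.5} then shows that every block is a spectrum of $\mu$ and every transversal is a spectrum of $\nu$. Since $\Lambda$ is a spectrum, it is nonempty, so at least one block and one transversal exist, and both $\mu$ and $\nu$ are spectral.

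The partition comes from the relation on $\Lambda$ given by $\lambda\sim\lambda'$ if and only if $\lambda-\lambda'\in D\cup\{0\}$.

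\emph{Reflexivity} is immediate.

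\emph{Symmetry.} We have $\widehat\mu(-\xi)=\overline{\widehat\mu(\xi)}$ and likewise for $\nu$, because both are real (probability) measures. Hence $\mathcal{Z}(\widehat\mu)$ and $\mathcal{Z}(\widehat\nu)$ are symmetric under $\xi\mapsto-\xi$, and so $D=-D$.

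\emph{Transitivity.} This is the only point where \eqref{cond1} enters, and I expect it to be the main obstacle. Suppose $\lambda-\lambda'=d_1\in D$ and $\lambda'-\lambda''=d_2\in D$. Then
$$\lambda-\lambda''=d_1-(-d_2)\in D-D,$$
because $-d_2\in D$. If $\lambda\neq\lambda''$, this difference is a nonzero difference of two elements of the spectrum $\Lambda$. By \eqref{(1.1)} it lies in $\mathcal{Z}(\widehat\eta)$, so \eqref{cond1} puts it in $D$. The degenerate cases, where some of the three points coincide, are trivial. Thus $\sim$ is an equivalence relation, and I take the blocks $\Lambda_i$ to be its classes.

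Next I verify the two hypotheses of Corollary~\ref{cor2.5}.

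\emph{Blocks are orthogonal for $\mu$.} Distinct $\lambda,\lambda'$ in one block satisfy $\lambda-\lambda'\in D\subset\mathcal{Z}(\widehat\mu)$.

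\emph{Transversals are orthogonal for $\nu$.} Let $\lambda\in\Lambda_i$ and $\lambda'\in\Lambda_j$ with $i\neq j$. Then $\lambda-\lambda'\in\mathcal{Z}(\widehat\eta)=\mathcal{Z}(\widehat\mu)\cup\mathcal{Z}(\widehat\nu)$ by \eqref{(1.2)}, but $\lambda-\lambda'\notin D$. If $\lambda-\lambda'\notin\mathcal{Z}(\widehat\mu)$, it must lie in $\mathcal{Z}(\widehat\nu)$. If instead $\lambda-\lambda'\in\mathcal{Z}(\widehat\mu)$, then not being in $D=\mathcal{Z}(\widehat\mu)\setminus\mathcal{Z}(\widehat\nu)$ forces $\lambda-\lambda'\in\mathcal{Z}(\widehat\nu)$. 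In both cases $\widehat\nu(\lambda-\lambda')=0$. Hence every choice $\{\lambda_i:i\in I\}$ of one point from each block is orthogonal for $\nu$.

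Finally, I apply Corollary~\ref{cor2.5}, whose only extra hypotheses are that $\mu$ is compactly supported and that $\Lambda$ is a nonempty countable set. Since $\Lambda$ is a spectrum of $\mu*\nu=\eta$, each $\Lambda_i$ is a spectrum of $\mu$ and each transversal is a spectrum of $\nu$. This gives the spectrality of both factors.
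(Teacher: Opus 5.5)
Your proposal is correct and follows essentially the same route as the paper. You use the same equivalence relation (your ``$\lambda-\lambda'\in D\cup\{0\}$'' coincides with the paper's ``$\lambda=\lambda'$ or $\lambda-\lambda'\in D$''), the same use of $D=-D$ and \eqref{cond1} to get transitivity, the same checks that blocks are orthogonal for $\mu$ and transversals are orthogonal for $\nu$, and the same final appeal to Corollary~\ref{cor2.5}.
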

\begin{proof}
Let~$\Lambda$~be a spectrum of~$\eta$.
Define a relation on~$\Lambda$~by
$$\lambda\sim\lambda'\quad\Longleftrightarrow\quad
\lambda=\lambda'\text{ or }\lambda-\lambda'\in D.$$
We first verify that~$\sim$~is an equivalence relation.
Reflexivity is immediate.
Since~$\widehat\mu(-\xi)=\overline{\widehat\mu(\xi)}$~and the same identity holds for~$\nu$, both zero sets are symmetric about the origin.
Thus~$D=-D$, which proves symmetry.
For transitivity, suppose that~$\lambda\sim\lambda'$~and~$\lambda'\sim\lambda''$.
If any two of these points coincide, the conclusion follows immediately.
Otherwise,
$$\lambda-\lambda'\in D,\qquad\lambda''-\lambda'\in D.$$
Since~$\Lambda$~is orthogonal for~$\eta$, we also have~$\lambda-\lambda''\in\mathcal{Z}(\widehat\eta)$.
Applying~\eqref{cond1} to~$\alpha=\lambda-\lambda'$~and~$\beta=\lambda''-\lambda'$~gives
$$\lambda-\lambda''=\alpha-\beta\in D.$$
This proves transitivity.
\\
\indent
Let~$\Lambda_i$, ~$i\in I$, be the equivalence classes. Then~$\Lambda=\bigcup_{i\in I}\Lambda_i$.
Distinct elements of~$\Lambda_i$~have difference in~$D\subset\mathcal{Z}(\widehat\mu)$, so~$\Lambda_i$~is orthogonal for~$\mu$.
If~$\lambda\in\Lambda_i$~and~$\lambda'\in\Lambda_j$~with~$i\neq j$, then~$\lambda-\lambda'\notin D$.
On the other hand,
$$\lambda-\lambda'\in\mathcal{Z}(\widehat\eta)=\mathcal{Z}(\widehat\mu)\cup\mathcal{Z}(\widehat\nu).$$
By the definition of~$D$, it follows that~$\lambda-\lambda'\in\mathcal{Z}(\widehat\nu)$.
Thus every transversal~$\{\lambda_i:i\in I\}$~is orthogonal for~$\nu$.
Corollary~\ref{cor2.5} now implies that both~$\mu$~and~$\nu$~are spectral.
\end{proof}

Taking~$\mu=L_{[0,1]}$~and~$T=\Z$~in Theorem~\ref{thm2.4}, we obtain the following one-dimensional characterization.
\begin{cor}\label{cor2.7}
Let~$\nu$~be a Borel probability measure on~$\Bbb{R}$.
Then~$L_{[0,1]}*\nu$~is spectral if and only if~$\nu$~is spectral and
$$supp\nu\subset a+\Bbb{Z}\qquad\text{for some }a\in\Bbb{R}.$$
In this case,~$\nu$~is a uniform probability measure on a finite set; more precisely,
$$\nu=\frac1{\#B}\sum_{b\in B}\delta_{a+b}$$
for some finite nonempty set~$B\subset\Bbb{Z}$.
Moreover, if~$\Gamma$~is any spectrum of~$\nu$, then~$\Gamma+\Bbb{Z}$~is a spectrum of~$L_{[0,1]}*\nu$.
\end{cor}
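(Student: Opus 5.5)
The plan is to apply Theorem~\ref{thm2.4} with $\mu=L_{[0,1]}$ and $T=\Z$, and then identify the structure of $\nu$ by a direct argument.

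\textbf{Step 1: the hypotheses of Theorem~\ref{thm2.4}.} I would first check that $\mu=L_{[0,1]}$ satisfies them. The measure is compactly supported, and
$$\widehat{L_{[0,1]}}(\xi)=e^{-\pi i\xi}\,\frac{\sin \pi\xi}{\pi\xi}.$$
Hence $\mathcal Z(\widehat\mu)=\Z\setminus\{0\}$, so $\mathcal Z(\widehat\mu)\cup\{0\}=\Z$ is an additive subgroup. Moreover $T^\perp=\Z$, and $\Z$ is the classical spectrum of $L_{[0,1]}$. Theorem~\ref{thm2.4} then gives both the equivalence and the statement that $\Gamma+\Z$ is a spectrum of $L_{[0,1]}*\nu$ for every spectrum $\Gamma$ of $\nu$.

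\textbf{Step 2: the structure of $\nu$.} Now assume $\nu$ is spectral and $supp\,\nu\subset a+\Z$. Then $\nu=\sum_{b\in\Z}p_b\delta_{a+b}$ is purely atomic, and I would show its support is finite with equal weights. The plan has three parts.

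\emph{Each atom and each exponential is bounded.} For any $\lambda$ and any $b$ with $p_b>0$,
$$\|e_\lambda\|_{L^2(\nu)}=1\quad\text{and}\quad\big|\langle \mathbf 1_{\{a+b\}},e_\lambda\rangle\big|^2=p_b^2.$$

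\emph{Atoms force the spectrum to be finite.} Expand $\mathbf 1_{\{a+b\}}$ in the orthonormal basis $E(\Gamma)$, using Parseval:
$$p_b=\|\mathbf 1_{\{a+b\}}\|^2=\sum_{\gamma\in\Gamma}p_b^2=p_b^2\,\#\Gamma.$$
So $\#\Gamma=1/p_b$ for every atom. Consequently $\Gamma$ is finite and all weights $p_b$ are equal, say $p_b=1/\#\Gamma$.

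\emph{The support is finite.} Since the weights sum to one, the number of atoms is $\#\Gamma$. Thus $\nu$ is uniform on a finite set $a+B$ with $B\subset\Z$ and $\#B=\#\Gamma$.

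This Parseval-on-atoms argument is cleaner than invoking \L{}aba--Wang~\cite[Theorem~1.2(2)]{LW06}, although that reference could also be cited.

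\textbf{Main obstacle.} The only point needing care is that Parseval is applied in $L^2(\nu)$ to the indicator of an atom. This is legitimate because $\nu$ is atomic, so the indicator is a nonzero element of $L^2(\nu)$. The rest is direct substitution into Theorem~\ref{thm2.4}.
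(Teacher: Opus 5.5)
Your proof is correct. Step 1 matches the paper exactly: the paper also specializes Theorem~\ref{thm2.4} to $\mu=L_{[0,1]}$, with $T=\Z=T^\perp$, and uses that $\Z$ is a spectrum of $L_{[0,1]}$.

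The difference is in identifying $\nu$. The paper only observes that $\nu$ is discrete and cites \L{}aba--Wang (Theorem~1.2(2) of their 2006 paper). You instead prove the needed special case directly, by applying Parseval to $\mathbf 1_{\{a+b\}}$ to get $p_b=p_b^2\,\#\Gamma$. This identity is essentially the mechanism behind the cited result. The extra strength of \L{}aba--Wang, which also excludes spectral measures mixing an atom with a continuous part, is not needed here: $supp\,\nu\subset a+\Z$ together with $\nu(\R\setminus supp\,\nu)=0$ already makes $\nu$ purely atomic.

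Your route is self-contained and yields a little more, namely that every spectrum of $\nu$ has exactly $\#B$ elements. The paper's citation is simply shorter.
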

\begin{proof}
For~$\mu=L_{[0,1]}$, we have~$\mathcal{Z}(\widehat\mu)\cup\{0\}=\Bbb{Z}$, ~$\Bbb{Z}^\perp=\Bbb{Z}$, and~$\Bbb{Z}$~is a spectrum of~$\mu$.
Thus the equivalence and the assertion concerning~$\Gamma+\Bbb{Z}$~follow from Theorem~\ref{thm2.4}.
It remains to describe~$\nu$~when~$L_{[0,1]}*\nu$~is spectral.
Since~$supp\nu\subset a+\Bbb{Z}$, the measure~$\nu$~is discrete.
By~\cite[Theorem~1.2(2)]{LW06}, a discrete spectral probability measure has finite support and equal weights.
Hence
$$\nu=\frac1{\#B}\sum_{b\in B}\delta_{a+b}$$
for some finite nonempty set~$B\subset\Bbb{Z}$.
\end{proof}

The following example shows that the lattice-support conclusion of Corollary~\ref{cor2.7} fails in higher dimensions.
\begin{rem}
Put
$$v=(1,1/2),\qquad\nu=\tfrac12(\delta_0+\delta_v),\qquad
\Omega=[0,1]^2\cup\big(v+[0,1]^2\big).$$
The two squares have disjoint interiors, and~$L_{[0,1]^2}*\nu=\tfrac12\mathbf1_\Omega\,dx$.
Up to boundaries,~$\Omega$~is a fundamental domain for the lattice~$2\Bbb{Z}\times\Bbb{Z}$.
Consequently, its normalized Lebesgue measure has spectrum~$(\tfrac12\Bbb{Z})\times\Bbb{Z}$.
However,~$\{0,v\}$~is not contained in a translate of~$\Bbb{Z}^2$.
For~$d>2$, taking~$v=(1,1/2,0,\cdots,0)$~gives the same example with spectrum~$(\tfrac12\Bbb{Z})\times\Bbb{Z}^{d-1}$.
\end{rem}

For~$d\geq2$, one has
$$\mathcal{Z}(\widehat{L_{[0,1]^d}})=\bigcup_{j=1}^d
\{\xi\in\Bbb{R}^d:\xi_j\in\Bbb{Z}\setminus\{0\}\}.$$
Adjoining the origin does not produce an additive subgroup, so Theorem~\ref{thm2.4} does not apply directly.
Nevertheless, the factor-spectrality conclusion remains valid.
The key observation is that $\mathcal L_{[0,1]^d}$ is the convolution
of the uniform probability measures on the coordinate unit segments.
For each segment factor, we partition the first-coordinate projection
of a spectrum into integer cosets and retain the corresponding
full frequency sections.

\begin{thm}\label{thm2.8}
Let~$\nu$~be a Borel probability measure on~$\Bbb{R}^d$.
If~$L_{[0,1]^d}*\nu$~is spectral, then~$\nu$~is spectral.
\end{thm}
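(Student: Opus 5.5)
Write $L_{[0,1]^d}=\sigma_1*\cdots*\sigma_d$, where $\sigma_j$ is the uniform probability measure on the segment $[0,1]u_j$, and $u_1,\dots,u_d$ are the coordinate vectors. The plan is to peel off one segment factor at a time. Precisely, I will show that if $\sigma_j*\rho$ is spectral for some Borel probability measure $\rho$ on $\Bbb{R}^d$, then $\rho$ is spectral. Applying this $d$ times, with $\rho=\sigma_{j+1}*\cdots*\sigma_d*\nu$ at the $j$-th stage, gives the theorem. By symmetry it suffices to treat $j=1$. Note that $\widehat{\sigma_1}(\xi)=\widehat{L_{[0,1]}}(\xi_1)$ depends only on $\xi_1$ and vanishes exactly when $\xi_1\in\Bbb{Z}\setminus\{0\}$. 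Theorem~\ref{thm2.4} cannot be applied directly, because $\mathcal{Z}(\widehat{\sigma_1})\cup\{0\}$ is not a subgroup of $\Bbb{R}^d$ (it is a union of hyperplanes together with the origin). Instead I will redo the Bessel-equality argument in the product-like coordinates, as suggested before the theorem.

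Let $\Lambda$ be a spectrum of $\sigma_1*\rho$. Project onto the first coordinate, $\pi_1(\lambda)=\lambda_1$, and partition $\pi_1(\Lambda)$ into blocks $A_i=\pi_1(\Lambda)\cap(c_i+\Bbb{Z})$. For $x\in A_i$ put $S_x=\{\lambda\in\Lambda:\lambda_1=x\}$.

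\textbf{Orthogonality.} Two points of $\Lambda$ whose first coordinates lie in different integer cosets have $\widehat{\sigma_1}\neq0$ at their difference, so $\widehat\rho$ vanishes there. Two distinct points with the same first coordinate likewise give $\widehat{\sigma_1}=1$, so $\widehat\rho$ vanishes. Hence for every choice $x_i\in A_i$ the set $\Gamma=\bigcup_i S_{x_i}$ is orthogonal for $\rho$.

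\textbf{Bessel equality.} By Lemma~\ref{lem2.2}, for every $\xi$,
$$1=\sum_i\sum_{x\in A_i}|\widehat{L_{[0,1]}}(\xi_1+x)|^2\,b_x(\xi),\qquad b_x(\xi)=\sum_{\lambda\in S_x}|\widehat\rho(\xi+\lambda)|^2 .$$
I will apply Lemma~\ref{lem2.3} with $a_x=|\widehat{L_{[0,1]}}(\xi_1+x)|^2$ and this $b_x$. The block $A_i$ is orthogonal for $L_{[0,1]}$, since distinct elements differ by nonzero integers, so $\sum_{x\in A_i}a_x\le1$. The bound $\sum_iM_i\le1$ follows as in Step~1 of Theorem~\ref{thm2.4}: approximate each supremum by one fiber and use orthogonality of the union of the chosen fibers. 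Lemma~\ref{lem2.3} then gives $b_x(\xi)=M_i(\xi)$ whenever $a_x\neq0$, together with $\sum_iM_i(\xi)=1$.

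\textbf{Main obstacle: removing the restriction $a_x\neq0$.} For fixed $x$, the condition $a_x\neq0$ holds for $\xi_1$ off the discrete set $\Bbb{Z}-x$, which is dense in $\Bbb{R}^d$. So $b_x\ge b_{x'}$ on a dense set for every $x'\in A_i$. The difficulty is that $b_x$ is a series, so this inequality cannot be extended to all $\xi$ by continuity unless $b_x$ is known to be continuous. Here I will use Lemma~\ref{lem2.2}(iii) applied to $\rho$ and the orthogonal set $S_x$: this makes $b_x=Q_{\rho,S_x}$ continuous. This is the key point, and it needs no compact support. It follows that $M_i(\xi)=b_x(\xi)$ for all $\xi$ and every $x\in A_i$. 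Therefore, for any choice $x_i\in A_i$, the set $\Gamma=\bigcup_iS_{x_i}$ satisfies $Q_{\rho,\Gamma}=\sum_iM_i\equiv1$, and it is a spectrum of $\rho$ by Lemma~\ref{lem2.2}(ii). In particular $\rho$ is spectral, which completes the inductive step. Completeness of the blocks $A_i$ for $L_{[0,1]}$ is not needed for this conclusion, so the analytic-continuation step of Theorem~\ref{thm2.4} can be skipped.
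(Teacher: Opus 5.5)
Your proof is correct and follows essentially the same route as the paper. You use the same factorization $L_{[0,1]^d}=\sigma_1*\cdots*\sigma_d$, the same integer-coset blocks of $\pi_1(\Lambda)$ with their fibres, and the same application of Lemma~\ref{lem2.3}. The only difference is the density step: the paper works on the single dense set $G\times\mathbb{R}^{d-1}$, where all weights $|\widehat{L_{[0,1]}}(\xi_1+a)|^2$ are nonzero at once, and needs continuity of $Q_{\eta,\Gamma}$ only, whereas you use continuity of each fibre sum $b_x$ on its own dense set (as in Step~2 of Theorem~\ref{thm2.4}), which additionally shows $b_x\equiv M_i$ for every $x\in A_i$.
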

\begin{proof}
We first prove that~$\sigma*\eta$~being spectral implies that~$\eta$~is spectral, where
$$\sigma=L_{[0,1]}\times\delta_{0^{d-1}}$$
and~$\eta$~is any Borel probability measure on~$\R^d$.
Here~$0^k$~denotes the zero vector in~$\Bbb{R}^k$; when~$d=1$, we simply take~$\sigma=L_{[0,1]}$.
Write
$$h(t)=\widehat{L_{[0,1]}}(t)=\int_0^1e^{-2\pi itx}\,dx.$$
Then~$h(0)=1$, ~$\mathcal{Z}(h)=\Bbb{Z}\setminus\{0\}$, and
$$\widehat{\sigma*\eta}(\xi)=h(\xi_1)\widehat\eta(\xi),\qquad \xi=(\xi_1,\cdots,\xi_d)\in\Bbb{R}^d.$$
Let~$\Lambda$~be a spectrum of~$\sigma*\eta$.

\noindent\textbf{Step 1: Orthogonality of the sections.}
Let~$\pi_1(\lambda)=\lambda_1$~be projection onto the first coordinate.
Partition~$\pi_1(\Lambda)$~modulo~$\Bbb{Z}$~as
$$\pi_1(\Lambda)=\bigcup_{i\in I}A_i,\qquad
A_i=\pi_1(\Lambda)\cap(c_i+\Bbb{Z})\neq\emptyset,$$
where the cosets~$c_i+\Bbb{Z}$~are pairwise distinct.
For~$a\in A_i$, define~$S_{i,a}=\{\lambda\in\Lambda:\lambda_1=a\}$.
Each~$S_{i,a}$~is orthogonal for~$\eta$.
Indeed, for distinct~$\lambda,\lambda'\in S_{i,a}$,
$$0=\widehat{\sigma*\eta}(\lambda-\lambda')=h(0)\widehat\eta(\lambda-\lambda')=\widehat\eta(\lambda-\lambda').$$
If~$i\neq j$, ~$\lambda\in S_{i,a}$~and~$\lambda'\in S_{j,b}$, then~$a-b\notin\Bbb{Z}$~and~$h(a-b)\neq0$.
Again, orthogonality gives~$\widehat\eta(\lambda-\lambda')=0$.
Thus, for every choice~$a_i\in A_i$, the disjoint union~$\bigcup_{i\in I}S_{i,a_i}$~is orthogonal for~$\eta$.

\noindent\textbf{Step 2: Equality in the Bessel bounds.}
For~$\xi\in\Bbb{R}^d$, set
$$q_{i,a}(\xi)=\sum_{\lambda\in S_{i,a}}|\widehat\eta(\xi+\lambda)|^2,\qquad
M_i(\xi)=\sup_{a\in A_i}q_{i,a}(\xi).$$
Since~$S_{i,a}$~is orthogonal for~$\eta$, we have~$0\leq q_{i,a}(\xi)\leq1$.
Also,~$A_i\subset c_i+\Bbb{Z}$~is orthogonal for~$L_{[0,1]}$, so
$$s_i(\xi)=\sum_{a\in A_i}|h(\xi_1+a)|^2\leq1.$$
We claim that~$\sum_{i\in I}M_i(\xi)\leq1$.
Fix~$\xi$, a finite nonempty~$F\subset I$, and~$\varepsilon>0$.
Choose~$a_i\in A_i$, ~$i\in F$, with~$M_i(\xi)<q_{i,a_i}(\xi)+\varepsilon/\#F$.
The union~$\bigcup_{i\in F}S_{i,a_i}$~is orthogonal for~$\eta$, so
$$\sum_{i\in F}M_i(\xi)<\sum_{i\in F}q_{i,a_i}(\xi)+\varepsilon\leq1+\varepsilon.$$
Letting~$\varepsilon\downarrow0$~and taking the supremum over finite~$F$~proves the claim.
Since~$\Lambda$~is a spectrum of~$\sigma*\eta$,
$$1=\sum_{i\in I}\sum_{a\in A_i}|h(\xi_1+a)|^2q_{i,a}(\xi)
\leq\sum_{i\in I}M_i(\xi)s_i(\xi)\leq\sum_{i\in I}M_i(\xi)\leq1.$$
Lemma~\ref{lem2.3} therefore gives
$$\sum_{i\in I}M_i(\xi)=1,\qquad
|h(\xi_1+a)|^2\big(M_i(\xi)-q_{i,a}(\xi)\big)=0.$$

\noindent\textbf{Step 3: A spectrum of~$\eta$.}
Define
$$G=\R\setminus\bigcup_{a\in\pi_1(\Lambda)}(\Bbb{Z}-a).$$
The excluded set is countable, so~$G$~is dense in~$\R$.
If~$\xi_1\in G$, then~$h(\xi_1+a)\neq0$~for every~$a\in\pi_1(\Lambda)$.
Consequently,~$q_{i,a}(\xi)=M_i(\xi)$~for all~$i\in I$~and~$a\in A_i$.
Fix one~$a_i\in A_i$~for each~$i$, independently of~$\xi$, and put~$\Gamma=\bigcup_{i\in I}S_{i,a_i}$.
By Step~1,~$\Gamma$~is orthogonal for~$\eta$.
For~$\xi_1\in G$,
$$Q_{\eta,\Gamma}(\xi)=\sum_{\lambda\in\Gamma}|\widehat\eta(\xi+\lambda)|^2
=\sum_{i\in I}q_{i,a_i}(\xi)=\sum_{i\in I}M_i(\xi)=1.$$
Lemma~\ref{lem2.2} gives continuity of~$Q_{\eta,\Gamma}$.
Since~$G\times\R^{d-1}$~is dense,~$Q_{\eta,\Gamma}\equiv1$~on~$\R^d$.
Thus~$\Gamma$~is a spectrum of~$\eta$.

Finally, for~$j=1,\cdots,d$, put
$$\sigma_j=\delta_{0^{j-1}}\times L_{[0,1]}\times\delta_{0^{d-j}},$$
where zero-dimensional factors are omitted.
Then~$L_{[0,1]^d}=\sigma_1*\cdots*\sigma_d$.
The same argument, with the~$j$th coordinate in place of the first, shows that~$\sigma_j*\eta$~being spectral implies that~$\eta$~is spectral.
Starting from~$\sigma_1*\cdots*\sigma_d*\nu$, we remove these factors successively and conclude that~$\nu$~is spectral.
\end{proof}

\begin{rem}
The converse fails even when~$\nu=L_{[0,1]^d}$.
Indeed,~$\nu$~is spectral, whereas
$$L_{[0,1]^d}*\nu=\tau^{\times d},\qquad\tau=L_{[0,1]}*L_{[0,1]}.$$
By Corollary~\ref{cor2.7},~$\tau$~is not spectral.
Moreover,~$\mathcal{Z}(\widehat\tau)\cup\{0\}=\Bbb{Z}$.
For~$d\geq2$, if~$\tau^{\times d}$~were spectral, Theorem~\ref{thm2.10} below, applied to~$\tau\times\tau^{\times(d-1)}$, would imply that~$\tau$~is spectral, a contradiction.
\end{rem}

\subsection{Product measures\label{sect.2.3}}
We first establish a blockwise characterization for product measures, which is the product analogue of Corollary~\ref{cor2.5}.
\begin{thm}\label{thm2.9}
Let~$\mu$~and~$\nu$~be Borel probability measures on~$\Bbb{R}^{d_1}$~and~$\Bbb{R}^{d_2}$, respectively.
Let~$\Lambda\subset\Bbb{R}^{d_1}\times\Bbb{R}^{d_2}$~be a nonempty countable set, and let~$\pi_1(\lambda,\eta)=\lambda$~be the projection onto~$\Bbb{R}^{d_1}$.
Suppose that~$\pi_1(\Lambda)=\bigcup_{i\in I}A_i$~is a partition into nonempty blocks.
For~$\lambda\in A_i$, define
$$B_{i,\lambda}=\{\eta\in\Bbb{R}^{d_2}:(\lambda,\eta)\in\Lambda\}.$$
Assume that each~$A_i$~is orthogonal for~$\mu$~and that, for every choice~$\lambda_i\in A_i$, the sets~$B_{i,\lambda_i}$~are pairwise disjoint and their union is orthogonal for~$\nu$.
Then~$\Lambda$~is a spectrum of~$\mu\times\nu$~if and only if each~$A_i$~is a spectrum of~$\mu$~and, for every choice~$\lambda_i\in A_i$, the union~$\bigcup_{i\in I}B_{i,\lambda_i}$~is a spectrum of~$\nu$.
In this case, for every~$i\in I$~and~$\zeta\in\Bbb{R}^{d_2}$, the sum
$$\sum_{\eta\in B_{i,\lambda}}|\widehat\nu(\zeta+\eta)|^2$$
is independent of~$\lambda\in A_i$.
\end{thm}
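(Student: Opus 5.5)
The plan is to follow the pattern of Corollary~\ref{cor2.5} and Theorem~\ref{thm2.8}, with Lemmas~\ref{lem2.2} and~\ref{lem2.3} as the main tools. For $(\xi,\zeta)\in\Bbb{R}^{d_1}\times\Bbb{R}^{d_2}$ and $\lambda\in A_i$, I will set
$$a_\lambda=|\widehat\mu(\xi+\lambda)|^2,\qquad b_\lambda=q_{i,\lambda}(\zeta)=\sum_{\eta\in B_{i,\lambda}}|\widehat\nu(\zeta+\eta)|^2,\qquad M_i(\zeta)=\sup_{\lambda\in A_i}q_{i,\lambda}(\zeta).$$
Since $\widehat{\mu\times\nu}(\xi,\zeta)=\widehat\mu(\xi)\widehat\nu(\zeta)$ and $\Lambda$ is the disjoint union of the sets $\{\lambda\}\times B_{i,\lambda}$, we get
$$Q_{\mu\times\nu,\Lambda}(\xi,\zeta)=\sum_i\sum_{\lambda\in A_i}a_\lambda b_\lambda.$$

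\textbf{Bounds.} Because $A_i$ is orthogonal for $\mu$, Lemma~\ref{lem2.2}(i) gives $\sum_{\lambda\in A_i}a_\lambda\le1$. To show $\sum_iM_i(\zeta)\le1$, I will repeat the finite-$F$, $\varepsilon$-argument from Step~1 of Theorem~\ref{thm2.4}. Choose near-maximizers $\lambda_i\in A_i$ for $i\in F$ and extend them to a full choice on $I$. By hypothesis the union $\bigcup_iB_{i,\lambda_i}$ is then a disjoint union that is orthogonal for $\nu$. Hence
$$\sum_{i\in F}q_{i,\lambda_i}(\zeta)\le Q_{\nu,\bigcup_i B_{i,\lambda_i}}(\zeta)\le1.$$

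\textbf{Necessity.} Suppose $\Lambda$ is a spectrum. Then $Q_{\mu\times\nu,\Lambda}\equiv1$, and Lemma~\ref{lem2.3} yields three facts:
\begin{itemize}
\item[(a)] $\sum_iM_i(\zeta)=1$;
\item[(b)] $a_\lambda>0$ implies $q_{i,\lambda}(\zeta)=M_i(\zeta)$;
\item[(c)] $M_i(\zeta)>0$ implies $\sum_{\lambda\in A_i}|\widehat\mu(\xi+\lambda)|^2=1$.
\end{itemize}
The key point is that $\xi$ and $\zeta$ vary independently. Fix $\lambda\in A_i$ and $\zeta$, and take $\xi=-\lambda$, so that $a_\lambda=1>0$. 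Then (b) gives $q_{i,\lambda}(\zeta)=M_i(\zeta)$, and this holds for every $\zeta$ and every $\lambda\in A_i$, with no analyticity or density argument needed. This proves the independence claim. Consequently, for any choice $\lambda_i\in A_i$, (a) gives
$$Q_{\nu,\bigcup_iB_{i,\lambda_i}}(\zeta)=\sum_iq_{i,\lambda_i}(\zeta)=\sum_iM_i(\zeta)=1,$$
so the union is a spectrum of $\nu$ by Lemma~\ref{lem2.2}(ii).

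For $A_i$, pick $\lambda\in A_i$ and $\eta\in B_{i,\lambda}$, which is nonempty by construction, and take $\zeta=-\eta$. Then $M_i(-\eta)\ge q_{i,\lambda}(-\eta)\ge|\widehat\nu(0)|^2=1>0$. Since $M_i(\zeta)$ does not depend on $\xi$, fact (c) gives $\sum_{\lambda\in A_i}|\widehat\mu(\xi+\lambda)|^2=1$ for every $\xi$. So $A_i$ is a spectrum of $\mu$.

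\textbf{Sufficiency.} Suppose each $A_i$ is a spectrum of $\mu$ and every choice of union is a spectrum of $\nu$. First I will show that $q_{i,\lambda}(\zeta)$ does not depend on $\lambda\in A_i$, using the cancellation argument of Corollary~\ref{cor2.5}. Take two choices that differ only at index $i$. Both unions are spectra, their other blocks are common, and the common series converges, so subtracting gives $q_{i,\lambda}=q_{i,\lambda'}$. Then
$$Q_{\mu\times\nu,\Lambda}(\xi,\zeta)=\sum_iq_{i,\lambda_i}(\zeta)\sum_{\lambda\in A_i}|\widehat\mu(\xi+\lambda)|^2=\sum_iq_{i,\lambda_i}(\zeta)=1,$$
and Lemma~\ref{lem2.2}(ii) shows $\Lambda$ is a spectrum of $\mu\times\nu$.

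\textbf{Main obstacle.} The step needing most care is the bound $\sum_iM_i\le1$. It relies on the hypothesis that the $B_{i,\lambda_i}$ are pairwise disjoint for arbitrary choices, so that summing $q_{i,\lambda_i}$ does not double-count any point of $\bigcup_iB_{i,\lambda_i}$. Everything else is bookkeeping.
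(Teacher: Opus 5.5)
Your proposal is correct and follows essentially the same route as the paper's proof. You use the same finite-selection bound $\sum_i M_i\le 1$, the same application of Lemma~\ref{lem2.3} with the evaluations $\xi=-\lambda$ (giving $q_{i,\lambda}=M_i$) and $\zeta=-\eta$ (giving $Q_i\equiv1$), and the same cancellation argument between two choices differing at one index for sufficiency.
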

\begin{proof}
For~$\xi\in\Bbb{R}^{d_1}$~and~$\zeta\in\Bbb{R}^{d_2}$, define
$$Q_i(\xi)=\sum_{\lambda\in A_i}|\widehat\mu(\xi+\lambda)|^2,\qquad
q_{i,\lambda}(\zeta)=\sum_{\eta\in B_{i,\lambda}}|\widehat\nu(\zeta+\eta)|^2,$$
and~$M_i(\zeta)=\sup_{\lambda\in A_i}q_{i,\lambda}(\zeta)$.
By the orthogonality assumptions and Lemma~\ref{lem2.2},
$$0\leq Q_i(\xi)\leq1,\qquad0\leq q_{i,\lambda}(\zeta)\leq1.$$
The finite-selection argument in Step~1 of Theorem~\ref{thm2.4}, applied to the disjoint unions of sections, also gives~$\sum_{i\in I}M_i(\zeta)\leq1$.
\\
\indent
For necessity, suppose that~$\Lambda$~is a spectrum of~$\mu\times\nu$.
Lemma~\ref{lem2.2} yields
$$1=\sum_{i\in I}\sum_{\lambda\in A_i}|\widehat\mu(\xi+\lambda)|^2q_{i,\lambda}(\zeta).$$
Applying Lemma~\ref{lem2.3}, we obtain~$\sum_{i\in I}M_i(\zeta)=1$~and
$$|\widehat\mu(\xi+\lambda)|^2\big(M_i(\zeta)-q_{i,\lambda}(\zeta)\big)=0,\qquad
M_i(\zeta)(1-Q_i(\xi))=0.$$
Fix~$i\in I$~and~$\lambda\in A_i$.
Taking~$\xi=-\lambda$~and using~$\widehat\mu(0)=1$, we get
$$q_{i,\lambda}(\zeta)=M_i(\zeta)\qquad(\zeta\in\Bbb{R}^{d_2}).$$
Thus the section sum is independent of~$\lambda\in A_i$.
For any choice~$\lambda_i\in A_i$, put~$\Gamma=\bigcup_{i\in I}B_{i,\lambda_i}$.
This union is disjoint, and
$$\sum_{\eta\in\Gamma}|\widehat\nu(\zeta+\eta)|^2
=\sum_{i\in I}q_{i,\lambda_i}(\zeta)=\sum_{i\in I}M_i(\zeta)=1.$$
Hence~$\Gamma$~is a spectrum of~$\nu$.
To prove completeness of~$A_i$, fix~$\lambda\in A_i$~and choose~$\eta\in B_{i,\lambda}$.
Such an~$\eta$~exists because~$A_i\subset\pi_1(\Lambda)$.
Then
$$M_i(-\eta)\geq q_{i,\lambda}(-\eta)\geq|\widehat\nu(0)|^2=1.$$
The identity~$M_i(-\eta)(1-Q_i(\xi))=0$~therefore gives~$Q_i(\xi)=1$~for all~$\xi\in\Bbb{R}^{d_1}$.
Thus~$A_i$~is a spectrum of~$\mu$.
\\
\indent
For sufficiency, assume that every~$A_i$~is a spectrum of~$\mu$~and every union obtained by choosing one section from each block is a spectrum of~$\nu$.
Fix a choice~$\lambda_i\in A_i$.
For a fixed~$i\in I$~and~$\lambda\in A_i$, both
$$\bigcup_{j\in I}B_{j,\lambda_j}\qquad\text{and}\qquad
B_{i,\lambda}\cup\bigcup_{j\neq i}B_{j,\lambda_j}$$
are spectra of~$\nu$, with disjoint unions.
Consequently,
$$q_{i,\lambda_i}(\zeta)+\sum_{j\neq i}q_{j,\lambda_j}(\zeta)
=1=q_{i,\lambda}(\zeta)+\sum_{j\neq i}q_{j,\lambda_j}(\zeta).$$
Cancellation gives~$q_{i,\lambda}(\zeta)=q_{i,\lambda_i}(\zeta)$.
Since~$Q_i\equiv1$, we have
\begin{align*}
\sum_{(\lambda,\eta)\in\Lambda}|\widehat{\mu\times\nu}(\xi+\lambda,\zeta+\eta)|^2
&=\sum_{i\in I}\sum_{\lambda\in A_i}|\widehat\mu(\xi+\lambda)|^2q_{i,\lambda}(\zeta)\\
&=\sum_{i\in I}q_{i,\lambda_i}(\zeta)Q_i(\xi)\\
&=\sum_{i\in I}q_{i,\lambda_i}(\zeta)=1.
\end{align*}
Lemma~\ref{lem2.2} shows that~$\Lambda$~is a spectrum of~$\mu\times\nu$.
\end{proof}

As an application, we recover the following result of~\cite{KLLL26} by a different method, based on the blockwise Fourier-sum characterization above.
\begin{thm}\label{thm2.10}
Let~$\mu$~and~$\nu$~be Borel probability measures on~$\Bbb{R}^{d_1}$~and~$\Bbb{R}^{d_2}$, respectively.
Suppose that~$\mathcal{Z}(\widehat\mu)\cup\{0\}=T$~for an additive subgroup~$T\subset\Bbb{R}^{d_1}$.
Then~$\mu\times\nu$~is spectral if and only if~$T$~is a spectrum of~$\mu$~and~$\nu$~is spectral.
\\
\indent
More precisely, let~$\Lambda$~be any spectrum of~$\mu\times\nu$, and let~$\pi_1(\lambda,\eta)=\lambda$~be the first-coordinate projection.
Partition~$\pi_1(\Lambda)$~modulo~$T$~as
$$\pi_1(\Lambda)=\bigcup_{i\in I}A_i,\qquad
A_i=\pi_1(\Lambda)\cap(c_i+T)\neq\emptyset,$$
where the cosets~$c_i+T$~are pairwise distinct, and define
$$B_{i,\lambda}=\{\eta\in\Bbb{R}^{d_2}:(\lambda,\eta)\in\Lambda\}\qquad(\lambda\in A_i).$$
Then~$A_i=c_i+T$~for every~$i\in I$. For every choice~$\lambda_i\in A_i$, the sets~$B_{i,\lambda_i}$~are pairwise disjoint and their union is a spectrum of~$\nu$.
\end{thm}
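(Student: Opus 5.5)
The plan is to reduce Theorem~\ref{thm2.10} to the blockwise characterization Theorem~\ref{thm2.9}, and then use the subgroup structure to upgrade each block $A_i$ to a full coset, in the same way as Step~4 of Theorem~\ref{thm2.4}.

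\textbf{Sufficiency.} If $T$ is a spectrum of $\mu$ and $\Gamma$ is a spectrum of $\nu$, then $T\times\Gamma$ is a spectrum of $\mu\times\nu$. This follows from Lemma~\ref{lem2.2}(ii), since the Fourier sum factors:
$$\sum_{t\in T}|\widehat\mu(\xi+t)|^2\sum_{\gamma\in\Gamma}|\widehat\nu(\zeta+\gamma)|^2=1.$$

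\textbf{Necessity.} Let $\Lambda$ be a spectrum of $\mu\times\nu$, and partition $\pi_1(\Lambda)$ modulo $T$ as in the statement. I first check the hypotheses of Theorem~\ref{thm2.9}.
\begin{itemize}
\item[(a)] Each $A_i$ is orthogonal for $\mu$. Distinct elements of $A_i$ differ by an element of $T\setminus\{0\}=\mathcal{Z}(\widehat\mu)$.
\item[(b)] For a choice $\lambda_i\in A_i$, the sections $B_{i,\lambda_i}$ are pairwise disjoint and their union is orthogonal for $\nu$. Take $(\lambda_i,\eta)$ and $(\lambda_j,\eta')$ in $\Lambda$ with $(i,\eta)\ne(j,\eta')$. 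Orthogonality for $\mu\times\nu$ gives
$$\widehat\mu(\lambda_i-\lambda_j)\,\widehat\nu(\eta-\eta')=0.$$
If $i\ne j$, then $\lambda_i-\lambda_j\notin T$, so $\widehat\mu(\lambda_i-\lambda_j)\ne0$ and hence $\widehat\nu(\eta-\eta')=0$. Since $\widehat\nu(0)=1$, this also forces $\eta\ne\eta'$, which gives disjointness. If $i=j$ and $\eta\ne\eta'$, then $\widehat\mu(0)=1$ again forces $\widehat\nu(\eta-\eta')=0$.
\end{itemize}
Theorem~\ref{thm2.9} now applies. It shows that each $A_i$ is a spectrum of $\mu$, and that every union $\bigcup_i B_{i,\lambda_i}$ is a spectrum of $\nu$; in particular, $\nu$ is spectral.

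\textbf{Filling the cosets.} It remains to show $A_i=c_i+T$, after which $T=A_i-c_i$ is a spectrum of $\mu$ by translation invariance. Suppose $u\in(c_i+T)\setminus A_i$. Then $u-\lambda\in T\setminus\{0\}=\mathcal{Z}(\widehat\mu)$ for every $\lambda\in A_i$. So $e_u$ is a unit vector orthogonal to every element of the orthonormal basis $E(A_i)$ of $L^2(\mu)$, which is a contradiction.

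The main point needing care is (b): the orthogonality of the union must come from the $\nu$-factor alone, and the same-block, same-fiber case must be handled separately. Both are settled by $\widehat\mu(0)=\widehat\nu(0)=1$ together with $\widehat\mu\ne0$ off $T$. No compact support is used, since Theorem~\ref{thm2.9} requires none.
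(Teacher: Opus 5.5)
Your proposal is correct and follows the paper's proof essentially step for step. You verify the hypotheses of Theorem~\ref{thm2.9} using $\widehat\mu\neq0$ off $T$ and $\widehat\mu(0)=\widehat\nu(0)=1$, apply that theorem, then fill each block $A_i$ to the full coset $c_i+T$ via the unit vector $e_u$ orthogonal to the basis $E(A_i)$, and obtain sufficiency from the spectrum $T\times\Gamma$.
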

\begin{proof}
Let~$\Lambda$~be a spectrum of~$\mu\times\nu$.
If~$\lambda,\lambda'\in A_i$~are distinct, then~$\lambda-\lambda'\in T\setminus\{0\}=\mathcal{Z}(\widehat\mu)$.
Thus~$A_i$~is orthogonal for~$\mu$.
Orthogonality of~$\Lambda$~gives
$$\widehat\mu(\lambda-\lambda')\widehat\nu(\eta-\eta')=0
\qquad\text{for distinct }(\lambda,\eta),(\lambda',\eta')\in\Lambda.$$
Taking~$\lambda=\lambda'$~shows that each~$B_{i,\lambda}$~is orthogonal for~$\nu$.
If~$i\neq j$, ~$\lambda\in A_i$~and~$\lambda'\in A_j$, then~$\widehat\mu(\lambda-\lambda')\neq0$, so~$\widehat\nu(\eta-\eta')=0$.
In particular,~$\eta\neq\eta'$.
It follows that every choice~$\lambda_i\in A_i$~gives pairwise disjoint sections~$B_{i,\lambda_i}$~whose union is orthogonal for~$\nu$.
Theorem~\ref{thm2.9} implies that each~$A_i$~is a spectrum of~$\mu$~and every such union is a spectrum of~$\nu$.
If~$u\in(c_i+T)\setminus A_i$, then~$u-\lambda\in T\setminus\{0\}=\mathcal{Z}(\widehat\mu)$~for every~$\lambda\in A_i$.
Thus~$e_u$~would be orthogonal to the orthonormal basis~$E(A_i)$, a contradiction.
Hence~$A_i=c_i+T$, and translation of the spectrum shows that~$T$~is a spectrum of~$\mu$.
Conversely, if~$T$~and~$\Gamma$~are spectra of~$\mu$~and~$\nu$, respectively, then~$T\times\Gamma$~is a spectrum of~$\mu\times\nu$.
\end{proof}
\begin{cor}\label{cor2.11}
Let~$\nu$~be a Borel probability measure on~$\Bbb{R}^m$.
For every positive integer~$d$, the measure~$L_{[0,1]^d}\times\nu$~is spectral if and only if~$\nu$~is spectral.
\end{cor}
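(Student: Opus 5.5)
We argue by induction on $d$, applying Theorem~\ref{thm2.10} with $\mu=L_{[0,1]}$ at each step. The basic fact is that $\widehat{L_{[0,1]}}(t)=\int_0^1e^{-2\pi itx}\,dx$ vanishes exactly at $t\in\Bbb{Z}\setminus\{0\}$. Hence $\mathcal{Z}(\widehat{L_{[0,1]}})\cup\{0\}=\Bbb{Z}$, which is an additive subgroup of $\Bbb{R}$. Moreover, $\Bbb{Z}$ is a spectrum of $L_{[0,1]}$ by the classical Fourier series on the unit interval; equivalently, $\sum_{k\in\Bbb{Z}}|\widehat{L_{[0,1]}}(\xi+k)|^2=1$ by Lemma~\ref{lem2.2}(ii).

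\textbf{Case $d=1$.} Theorem~\ref{thm2.10} with $d_1=1$, $d_2=m$, $\mu=L_{[0,1]}$ and $T=\Bbb{Z}$ gives at once that $L_{[0,1]}\times\nu$ is spectral if and only if $\Bbb{Z}$ is a spectrum of $L_{[0,1]}$ and $\nu$ is spectral. The first condition always holds, so the equivalence reduces to the spectrality of $\nu$.

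\textbf{Inductive step.} Use the identification $L_{[0,1]^d}\times\nu=L_{[0,1]}\times\big(L_{[0,1]^{d-1}}\times\nu\big)$ on $\Bbb{R}\times\Bbb{R}^{d-1+m}$. This is an equality of measures under the natural identification $\Bbb{R}^{d+m}=\Bbb{R}\times\Bbb{R}^{d-1+m}$, since $L_{[0,1]^d}=L_{[0,1]}\times L_{[0,1]^{d-1}}$. Spectrality is unaffected by this identification, which is just a regrouping of coordinates.
\begin{itemize}
\item Apply the case $d=1$ with $\nu$ replaced by the Borel probability measure $L_{[0,1]^{d-1}}\times\nu$ on $\Bbb{R}^{d-1+m}$. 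This shows that $L_{[0,1]^d}\times\nu$ is spectral if and only if $L_{[0,1]^{d-1}}\times\nu$ is spectral.
\item By the induction hypothesis, $L_{[0,1]^{d-1}}\times\nu$ is spectral if and only if $\nu$ is spectral.
\end{itemize}

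The sufficiency direction can also be seen directly: $\Bbb{Z}^d\times\Gamma$ is a spectrum whenever $\Gamma$ is a spectrum of $\nu$. No step is a real obstacle. The only points needing care are that Theorem~\ref{thm2.10} requires no compact support assumption on the second factor, so it applies to the product measure $L_{[0,1]^{d-1}}\times\nu$, and the bookkeeping of the coordinate regrouping.
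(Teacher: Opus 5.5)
Your proposal is correct and follows essentially the same route as the paper: you peel off one interval factor at a time via Theorem~\ref{thm2.10} with $\mu=L_{[0,1]}$ and $T=\Bbb{Z}$, and sufficiency comes from the tensor-product spectrum $\Bbb{Z}^d\times\Gamma$. The only difference is presentational: you carry the full equivalence through a formal induction, whereas the paper iterates only the necessity implication and treats sufficiency separately.
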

\begin{proof}
Since~$\mathcal{Z}(\widehat{L_{[0,1]}})\cup\{0\}=\Bbb{Z}$, Theorem~\ref{thm2.10} shows that~$L_{[0,1]}\times\sigma$~being spectral implies that~$\sigma$~is spectral.
Applying this implication successively to the~$d$~interval factors of~$L_{[0,1]^d}\times\nu$~gives the necessity.
Conversely, if~$\Gamma$~is a spectrum of~$\nu$, then~$\Bbb{Z}^d\times\Gamma$~is a spectrum of~$L_{[0,1]^d}\times\nu$~by the tensor-product construction.
\end{proof}

\end{document}